\newcommand{\prods}[2]{\left(#1\middle|#2\right)}
\theoremstyle{plain}
\newtheorem{thm}{Theorem}[section]
\newtheorem{prop}[thm]{Proposition}
\newcommand{\argmin}{\arg\!\min}
\theoremstyle{definition}
\newtheorem{defn}{Definition}[section]
\theoremstyle{remark}
\newtheorem{rem}{\bf Remark}[section]
\theoremstyle{remark}
\newtheorem{com*}{\bf Comment}
\def \newequation#1#2{
   \@definecounter{#1}
   \@namedef{the#1}{\hbox{#2}}
   \@namedef{#1}{$$\refstepcounter{#1}}
   \@namedef{end#1}{
      \eqno \csname the#1\endcsname $$\global\@ignoretrue
      }
}
\def \newequation#1#2{
   \@definecounter{#1}
   \@namedef{the#1}{\hbox{#2}}
   \@namedef{#1}{$$\refstepcounter{#1}}
   \@namedef{end#1}{
      \eqno \csname the#1\endcsname $$\global\@ignoretrue
      }
   }
\def \newequation#1#2{
   \@definecounter{#1}
   \@namedef{the#1}{\hbox{#2}}
   \@namedef{#1}{$$\refstepcounter{#1}}
   \@namedef{end#1}{
      \eqno \csname the#1\endcsname $$\global\@ignoretrue
      }
   }
\def \newequation#1#2{
   \@definecounter{#1}
   \@namedef{the#1}{\hbox{#2}}
   \@namedef{#1}{$$\refstepcounter{#1}}
   \@namedef{end#1}{
      \eqno \csname the#1\endcsname $$\global\@ignoretrue
      }
   }
\def \newequation#1#2{
   \@definecounter{#1}
   \@namedef{the#1}{\hbox{#2}}
   \@namedef{#1}{$$\refstepcounter{#1}}
   \@namedef{end#1}{
      \eqno \csname the#1\endcsname $$\global\@ignoretrue
      }
   }
\title{Product Markovian quantization  of  an $\mathbb R^d$-valued   Euler  scheme of a diffusion process  with applications to finance}
\author{ 
{\sc Lucio Fiorin} \thanks{Department of Mathematics,
University of Padova, via Trieste 63, 35121 Padova, Italy. Email: {\tt fiorin@math.unipd.it}.} \ \ \
{\sc  Gilles Pag\`es} \thanks{Laboratoire de Probabilit\'es et Mod\`eles Al\'eatoires (LPMA), UPMC-Sorbonne Universit\'e, UMR  CNRS 7599, case 188, 4, pl. Jussieu, F-75252 Paris Cedex 5, France.
E-mail: {\tt gilles.pages@upmc.fr} } \ \ \ 
{\sc  Abass Sagna} \thanks{ENSIIE \& Laboratoire de Math\'ematiques et Mod\'elisation d'Evry (LaMME), Universit\'e  d'Evry Val-d'Essonne,  UMR CNRS  8071,    23 Boulevard de France, 91037 Evry. E-mail: {\tt abass.sagna@ensiie.fr}. }\ \ \
\thanks{The first author benefited from the support of the Fondazione Aldo Gini of the University of Padova. The second  author  benefited from the support of the Chaire ``Risques financiers'',  a joint initiative of \'Ecole Polytechnique, ENPC-ParisTech and UPMC, under the aegis of the Fondation du Risque. The third author  benefited from the support of the Chaire ``Markets in Transition'', under the aegis of Louis Bachelier Laboratory, a joint initiative of \'Ecole polytechnique, Universit\'e d'\'Evry Val d'Essonne and  F\'ed\'eration Bancaire Fran\c{c}aise.} 
}
\date{}
\begin{document}

\maketitle

\begin{abstract}
We introduce a new approach to quantize the Euler scheme of an $\mathbb R^d$-valued  diffusion  process. This method is based on a Markovian and  componentwise product quantization and allows us, from a numerical point of view, to speak of {\em fast online quantization} in dimension greater than one since the product quantization of the Euler scheme of the diffusion  process and its companion weights and transition probabilities  may be computed quite instantaneously. We show that the resulting quantization process is a Markov chain, then, we  compute the associated  companion weights and transition probabilities  from (semi-) closed formulas.  From the analytical point of view, we show that the induced quantization errors at  the  $k$-th discretization step $t_k$ is a cumulative of the marginal quantization error up  to time $t_k$.    Numerical experiments are performed  for the pricing of a Basket call option, for the pricing of  a European call option in a Heston model  and for the approximation of the solution of  backward stochastic differential equations  to show  the performances of the method.  
\end{abstract}


\section{Introduction}
In  \cite{PagSagMQ} is  proposed and analyzed a Markovian  (fast) quantization of an $\mathbb R^d$-valued Euler scheme of a diffusion  process. However,  in practice, this approach allows to speak of fast quantization only in dimension one since, as soon as $d \ge 2$,   one has to use   recursive stochastic zero search  algorithm (known to be very time consuming, compared to deterministic procedures like the Newton-Raphson algorithm, see \cite{PagPri03}) to compute optimal quantizers, their associated weights and transition  probabilities.  In order to overcome this limitation, we propose  in this work another approach to quantize an $\mathbb R^d$-valued Euler scheme of a diffusion  process.  This method is based on a Markovian and  componentwise product quantization.  It  allows again to speak of fast quantization in high dimension since the product quantization of the Euler  scheme of the diffusion   process and its   transition probabilities  can be computed almost instantaneously still  using deterministic  recursive zero search algorithms.

In a general setting, the stochastic process  $(X_t)_{t\in [0,T]}$ of interest   is  defined as  the  (strong) solution to  the following stochastic differential equation  
\begin{equation} \label{EqSignalIntro}
 X_t = X_0+\int_0^t b(s,X_s) ds + \int_0^t\sigma(s,X_s) dW_s
 \end{equation}
where  $W$ is a  standard  $q$-dimensional  Brownian motion,   independent from the $\mathbb R^d$-valued random vector $X_0$, both defined on the same probability space $(\Omega,{\cal A}, \mathbb P)$. The drift  coefficient  $b:[0,T] \times \mathbb R^d \rightarrow \mathbb R^d$ and the volatility  coefficient   $\sigma:[0,T] \times \mathbb R^d \rightarrow \mathbb R^{d \times q}$ are Borel measurable functions  satisfying  appropriate Lipschitz continuity  and linear growth conditions (specified further on) which  ensure  the existence of  a unique strong solution of the stochastic differential equation. In corporate  finance, these processes are used to model the dynamics  of assets for several quantities of interest involving   the pricing and the  hedging  of derivatives.  These quantities are usually  of the form  
  \begin{equation}  \label{EqPriceEuroLikeOptionsC}
    \mathbb E \big[f( X_{T}) \big], \quad T>0, 
    \end{equation}
 or 
\begin{equation}  \label{EqNLfilterLikeOptionsC}
 \mathbb E \big[f(X_t) \vert   X_{s} = x \big] , \quad  0<s <t,
 \end{equation}
for a given Borel function $f: \mathbb R^d \rightarrow \mathbb R$.  For illustrative purposes, let us consider  the following three   examples which may be reduced to the computation of regular expectations like   \eqref{EqPriceEuroLikeOptionsC} or \eqref{EqNLfilterLikeOptionsC}.  First, let us consider the price of   a Basket call option with  maturity $T$ and  strike $K$, based on two stocks whose  prices $S^1$ and $S^2$   evolve following the dynamics
\begin{equation}   \label{EqBasketOptionIntro}
   \left \{ \begin{array}{l}

 dS_t^1  = r S_t^1  +  \rho \, \sigma_1  S_t^1 dW_{t}^1 +   \sqrt{1-\rho^2}\, \sigma_1 S_t^1  dW_t^2  \\
    dS_t^2  = r S_t^2 dt  + \sigma_2 S_t^2 dW_{t}^1  
    \end{array}  
 \right.
 \end{equation}
 where $r$ is the interest rate, $\sigma_1, \sigma_2>0$,  $\rho \in (-1,1)$ is a correlation term and  $W^1$ and $W^2$ are two independent Brownian motions.  We know that the no arbitrage price  at time $t=0$ in a complete market reads
\begin{equation}   \label{EqpriceBasketIntro}
e^{-rT} \mathbb E \big[ ( w_1 S^1_T + w_2 S^2_T -K)_{+} \big] = e^{-rT}  \mathbb E F(X_T), \quad  X = (S^1,S^2), 
\end{equation}
where  the weights $w_1$ and $w_2$ are usually assumed to be positive with a sum  equal to one  and where the function $F$ is defined, for every $x = (s^1,s^2) \in \mathbb R^2$, by  $F(x)  = (w_1s^1 + w_2 s^2 -K)_{+}$. Keep in mind that $x_{+} = \max(x, 0)$, $x \in \mathbb R$.

The second example concerns the  pricing of a call option with maturity $T$ and strike $K$, in the Heston model of \cite{Heston93} where  the stock price $S$  and its  stochastic variance $V$ evolve following the  (correlated) dynamics
\begin{equation}  \label{EqHestonIntro}
   \left \{ \begin{array}{l}
   dS_t  = r  S_t dt +   \rho \, \sqrt{V_t} S_t dW_{t}^1 + \sqrt{1-\rho^2}\, \sqrt{V_t}  S_{t} dW_t^2\\
 dV_t  = \kappa(\theta -V_t) dt  +   \sigma  \sqrt{V_t} dW_{t}^1 , \quad t \in [0,T].  
    \end{array}  
 \right.
 \end{equation}
In the previous equation, the parameter $r$ is still  the interest rate; $\kappa>0$ is the rate at which  $V$ reverts to the long  run average  variance $\theta>0$;  the parameter $\sigma>0$ is the volatility of the variance and $\rho \in [-1,1]$ is  the correlation term.  In this case,  the no arbitrage price   at time $t=0$ in a complete market  reads  under this risk neutral probability 
\begin{equation} \label{EqpriceHestonIntro}
e^{-rT} \mathbb E \big[(S_T - K)_{+} \big] = e^{-rT} \mathbb E H(X_T), \quad X=(S,V),
\end{equation}
where $H(x) = (x^1-K)_{+}$, for $x = (x^1,x^2) \in \mathbb R^2$. 

Note that both price expressions \eqref{EqpriceBasketIntro} and \eqref{EqpriceHestonIntro} are  of type  \eqref{EqPriceEuroLikeOptionsC}.

The last example concerns the approximation of the following Backward Stochastic Differential Equation (BSDE),  
   \begin{equation}   \label{EqBSDEIntro11}
 Y_t  =  \xi  + \int_t^T f(s,X_s,Y_s,Z_s) ds - \int_t^T Z_s \cdot dW_s,\quad t \in [0,T], 
 \end{equation}
where terminal condition of the form  $\xi = h(X_T)$ for  a given Borel function $h:\mathbb R^d \rightarrow \mathbb R$ and  where $X$ is a  strong solution to \eqref{EqSignalIntro}.  The process   $(Z_t)_{t \in [0,T]}$ is a square integrable progressively measurable process taking values in $\mathbb R^q$ and   $f : [0,T]{\small \times} \mathbb R^d $  ${\small \times} \mathbb R {\small \times \mathbb R^q} \rightarrow \mathbb R$ is a Borel function.  We will see further on that the approximation of the solution of the BSDE \eqref{EqBSDEIntro11} involves the computation of expressions of the type \eqref{EqNLfilterLikeOptionsC}.

In the general setting (in particular,  in both previous examples   \eqref{EqBasketOptionIntro}-\eqref{EqpriceHestonIntro})   the stochastic differential equation  (\ref{EqSignalIntro}) has no  explicit solution. Therefore,   both   quantities (\ref{EqPriceEuroLikeOptionsC}) and (\ref{EqNLfilterLikeOptionsC})  have to  be  approximated, for example,   by
  \begin{equation}  \label{EqPriceEuroLikeOptions}
    \mathbb E \big[f(\bar X_{T})\big] 
    \end{equation}
and
\begin{equation}  \label{EqNLfilterLikeOptions}
 \mathbb E \big[f(\bar X_{t_{k+1}}) \vert  \bar X_{t_k} = x \big] \quad \textrm{where  } t=t_{k+1},  s=t_k,
 \end{equation}
and where $(\bar  X_{t_k})_{k=0,\ldots,n}$ is a discretization scheme of the process $(X_t)_{t \ge 0}$ on  $[0,T]$, for a given discretization mesh $t_k = k  \Delta$, $k=0,\ldots, n$, $\Delta = T/n$. The Euler scheme   $(\bar  X_{t_k})_{k=0,\ldots,n}$ associated to  $(X_t)_{t\in [0,T]}$  is recursively  defined  by 
$$\bar X_{t_{k+1}}= \bar X_{t_k} + b(t_k,\bar X_{t_k}) \Delta  + \sigma(t_k,\bar X_{t_k}) (W_{t_{k+1}} - W_{t_k}), \quad  \bar X_0 = X_0.$$   
In the sequel, when no confusion may occur,     we will identify  the value $Y_{t_k}$ at time $t_k$ of any   process $(Y_{t_k})_{0 \le k \le n}$     by  $Y_{k}$, $k=0, \ldots,n$.

At this stage, the quantities \eqref{EqPriceEuroLikeOptions} and \eqref{EqNLfilterLikeOptions} still have  no closed formulas in the general setting  so is  the case when for example dealing with a general local volatility  model or  a stochastic volatility model like the Heston model. Consequently,   we have to make a spacial approximation of the expectation or the conditional expectation. This may be done by Monte Carlo simulation techniques or by optimal quantization method in particular, by the Markovian (fast) quantization method. 

The fast Markovian  quantization of the Euler scheme of an $\mathbb R^d$-valued  diffusion process  has been introduced in \cite{PagSagMQ}.  It consists of  a sequence of   quantizations $(\hat X_{k}^{\Gamma_k}  )_{k=0,\ldots,N}$ of the Euler scheme   $(\bar X_{k}  )_{k=0,\ldots,N}$  defined recursively as follows: 
 \begin{eqnarray*}
  \tilde X_0  &= &\bar X_0, \\
\hat X_{k}^{\Gamma_k}   &= &{\rm Proj}_{\Gamma_k}(\tilde X_{k})    \quad \textrm{and}\quad   \tilde X_{k+1} = {\cal E}_k(\hat X_{k}^{\Gamma_k} ,Z_{k+1}) ,\; k=0,\ldots,n-1,
  \end{eqnarray*}
where $(Z_k)_{k=1,\ldots, n}$ is an  i.i.d. sequence of  ${\cal N}(0;I_q)$-distributed random vectors, independent of  $\bar X_0$ and 
\[
   {\cal E}_{k}(x,z)  = x + \Delta b(t_{k},x) + \sqrt{\Delta} \sigma(t_{k},x) z, \quad x \in \mathbb R^d, \ z \in \mathbb R^q, \ k=0, \ldots, n-1.
   \]
  The sequence of quantizers satisfies for every $k \in \{0, \ldots,n\}$,
\[
 \Gamma_k \!\in  \argmin \{ \tilde D_{k}(\Gamma),\  \Gamma \subset \mathbb R^d, \ {\rm card}(\Gamma) \leq N_{k} \},
 \]
where for every grid $\Gamma \subset \mathbb R^d$, $\tilde D_{k+1} (\Gamma)  :=  \mathbb E \big[   {\rm dist}(\tilde X_{t_{k+1}}, \Gamma)^2 \big]$. However, this quantization method is  fast from the numerical point of view only in one  dimension. 

The aim of this work is to present  another approach to quantize the Euler scheme of an $\mathbb R^d$-valued diffusion process in order to speak of fast quantization in dimension greater than one.  We propose a Markovian and product quantization method. It allows us to compute instantaneously  the  optimal product quantizers   and their transition probabilities (and its companion  weights)  when the size of the quantizations are chosen reasonably.

The method is based on a Markovian and componentwise product quantization  of the process $(\bar X_k)_{0 \le k \le n}$. To be more precise,  let us denote by $\Gamma_k^{\ell}$ an  $N_k^{\ell}$-quantizer  of   the $\ell$-th component  $\bar X_k^{\ell}$ of the vector $\bar X_k$ and let  $\hat X_k^{i}$ be the  quantization  of $\bar X_k^{\ell}$ of size $N_k^{\ell}$,  on the grid $\Gamma_k^{\ell}$.    Let us   define the  product quantizer $\Gamma_k =  \bigotimes_{i=1}^d  \Gamma_k^{\ell}$ of size  $N_k = N_k^1  {\small \times }  \ldots {\small \times }  N_{k}^d$ of  the vector $\bar X_k$  as   
  \begin{eqnarray*}
  \Gamma_k &  = &  \big\{ (x_k^{1,i_1}, \ldots,x_k^{d,i_d}),   \quad  i_{\ell} \in \{1, \ldots,N_k^{\ell}\},  \  \ell \in \{1, \ldots,d\}    \big\}.  
  \end{eqnarray*}
Then,  assuming that  $\bar X_0$ is already  quantized as $\hat X_0$,  we define the product quantization $(\hat X_{t_k})_{0 \le k \le n}$ of the process $(\bar X_{t_k})_{0 \le k \le n}$  from the following recursion: 
   \begin{equation}  \label{EqAlgorithmIntro}
   \left \{ \begin{array}{l}
 \tilde X_0 = \hat X_0, \quad  \hat X_k ^{\ell} = {\rm Proj}_{\Gamma_k^{\ell}}(\tilde X_k^{i}), \ i=1, \ldots, d   \\
   \hat X_k  =  ( \hat X_k^1, \ldots, \hat X_k^d)  \quad  \textrm{and} \quad  \tilde X_{k+1}^{\ell}  =  {\cal E}_k^{\ell}(\hat X_k,Z_{k+1}), \ i=1, \ldots, d \\
   {\cal E}_k^{\ell} (x,z)= x^{\ell} + \Delta b^{\ell}(t_k,x) + \sqrt{\Delta} (\sigma^{\ell  \bullet}(t_k,x) \vert  z),  \ z = (z^1, \ldots,z^q)\in \mathbb R^q\\
     x=(x^1, \ldots,x^d), \ b=(b^1, \ldots, b^d)   \textrm{ and } (\sigma^{\ell  \bullet}(t_k,x) \vert  z) =\sum_{m=1}^q \sigma^{\ell m}(t_k,x) z^m
 \end{array}  
 \right.
 \end{equation}
 where for $a \in {\cal M}(d,q)$, $a^{\ell \bullet}  = [a_{\ell j}]_{j=1, \ldots,q}$.

 It is easy to see that the sequence of quantizers $(\hat X_{k})_{k \ge 0}$ is a Markov chain (see Remark \ref{PropMarkovChainProper}). Then, the challenging question is to know  how to compute its (set) values, $i.e.$ the product quantizer $\Gamma_k =  \bigotimes_{i=1}^d  \Gamma_k^{\ell}$, $k=0, \ldots n$, and  the associated   transition probabilities. Using the fact that the conditional distribution of the Euler scheme is a multivariate Gaussian distribution and that  each component of a Gaussian vector remains a scalar Gaussian random variable, we propose a way to quantize every component $\bar X_k^{\ell}$ of the vector $\bar X_k$, for $k=0, \ldots,n$.  We then define the product quantization $(\hat X_{k})_{0 \le k \le n}$ of $(\bar X_k)_{0 \le k \le n }$  from the recursive procedure \eqref{EqAlgorithmIntro}. Then,   we  show how to  compute, for every $k \ge 1$,  the  companion  transition probabilities (and the companion  weights)  associated to each component of the vector $\hat X_{k}$, for every $k \ge 1$ and to the vector $\hat X_{k}$ itself.  
 
When the components of  the vector $\bar X_{k}$ are independent for every $k=0, \ldots, n$, the method boils down to the usual product quantization of the vector $\bar X_k$,  where each component  is quantized from the Markovian recursive quantization method (see~\cite{PagSagMQ}). In this case, the transition probability weight associated to the vector $\hat X_k$ is the product  of the transition probability weights associated to its components.
 
  The main difficulties  arise when the components of $\bar X_k$ are not independent. In this work, we propose a closed formula even in this case by relying on  a domain decomposition  technique.
 
  To be more precise,  set, for every $k \in \{ 0, \ldots,n \}$,
  \begin{equation} \label{Eqmultindex}
  I_k = \big\{(i_1, \ldots,i_d), \ i_{\ell} \in \{1, \ldots, N_k^{\ell} \} \big\} 
  \end{equation}
  and for  $i := (i_1, \ldots,i_d)  \in I_k$,  set
  \begin{equation} \label{EqmultindexComp}
    x_k^{i} := (x_k^{1,i_1}, \ldots,x_k^{d,i_d}).
  \end{equation}

We will  show in Proposition \ref{PropTransitionProba}  that the transition probabilities of the Markov chain $(\hat X_{k})_{k \ge 0}$ reads, for any multi-indices $i \in I_k$ and $j  \in I_{k+1}$,
\begin{eqnarray} \label{EqProbaCondVectorIntro}
 \mathbb P\big(\widehat X_{k+1} = x_{k+1}^{j} \vert \widehat X_{k}  = x_k^{i} \big)  & = &  \mathbb E \Big[ \prod_{ \ell \in \mathbb J^{0}_k(x^{i}_k)} \mbox{\bf{1}}_{\{  \zeta \in \mathbb J^0_{k,j_{\ell}}(x^{i}_k)  \}} \max\big( \Phi_0(\beta_{j} (x_k^{i},\zeta))  -   \Phi_0(\alpha_{j}(x_k^{i},\zeta)),0\big) \Big]   \qquad
\end{eqnarray}
where $\zeta \sim {\cal N}(0; I_{q-1})$, with the convention that $\prod_{\ell \in \emptyset} (\cdot) = 1$. The function $\Phi_0$ stands for the cumulative distribution function of the ${\cal N} (0,1)$ and the functions $\alpha_j$ and $\beta_j$ are defined in Proposition \ref{PropTransitionProba}.  Although  this formula looks  quite involved, it turns out to be very simple to use in practice. In fact, keeping in mind that the optimal  quantization grids associated to  multivariate Gaussian random vectors (up to dimension $d=10$) can  be downloaded  on the  website {\tt www.quantize.maths-fi.com}, it is clear that \eqref{EqProbaCondVectorIntro} can be  computed  instantaneously using these optimal grids of multivariate normal vectors. Furthermore, Equation \eqref{EqProbaCondVectorIntro} allows us to deduce the weights associated to the product quantization $\hat X_{k+1}$, $k=0, \ldots,n-1$, since for every $j  \in I_{k+1}$,
\begin{equation} \label{EqWeightsIntro} 
\mathbb P \big(\hat X_{k+1}  = x_{k+1}^{j}  \big)   =  \sum_{i  \in I_k}   \mathbb P\big(\hat X_{k+1} = x_{k+1}^{j} \vert \hat X_{k}  = x_k^{i} \big)     \mathbb P \big(\hat X_{k} = x_k^{i}). 
\end{equation}

 Formulas \eqref{EqProbaCondVectorIntro}-\eqref{EqWeightsIntro} are  useful when,  for example, we deal with the approximation of the solution of BSDEs or when we deal with  the pricing  of a Basket call like  Equation \eqref{EqpriceBasketIntro}. In this last situation, given a time discretization mesh $t_0=0, \ldots, t_n=T$, the price of the Basket call option will be approximated by the cubature formula
 
 \begin{equation}  \label{EqApproxBasketCall}
 e^{-rT} \sum_{j  \in I_{n}} F(x_n^{j}) \, \mathbb P(\hat X_n = x_n^j),
 \end{equation}
 where the probability weights $\mathbb P(\hat X_n = x_n^j)$ are computed recursively  in a forward way using  equations \eqref{EqProbaCondVectorIntro} and \eqref{EqWeightsIntro}.  
 
  When the correlation coefficient $\rho = 0$ in \eqref{EqBasketOptionIntro}, the probabilities $\mathbb P(\hat X_n = x_n^j)$ in  \eqref{EqApproxBasketCall}   will be computed  in a simplified since  the components of the vectors  $\bar X_k$ are  independent.  We show in Proposition \ref{PropCasIndep} that the formula  \eqref{EqProbaCondVectorIntro} reads in the simplified form
\begin{eqnarray*} \label{EqProductProbaIntro}
 \mathbb P\big(\hat X_{k+1} = x_{k+1}^{j} \vert \hat X_{k}  = x_k^{i} \big) &=& \prod_{\ell=1}^d   \mathbb P\big(\hat X_{k+1}^{\ell} = x_{k+1}^{j_{\ell}} \vert \hat X_{k}  = x_k^{i} \big)  \\
  & = &  \prod_{\ell=1}^d  \big(\Phi_0 \big( x_{k+1}^{\ell,j_{\ell}+}(x_k^{i},0) \big)  -   \Phi_0 \big( x_{k+1}^{\ell,j_{\ell}-}(x_k^{i},0) \big) \big),
\end{eqnarray*}
for $i \in I_k, j \in I_{k+1}$.

 For the approximation of the solution of the BSDE \eqref{EqBSDEIntro11},  many numerical schemes and approximating methods have been proposed (see e.g.~\cite{BalPagPri0, BouTou, CriManTou, GobTur, HuNuaSon, GobLopTurVaz, BenDen}). In this paper, we just aim to test the numerical performances of our method using the (quantization) numerical scheme proposed in~\cite{PagSagBSDE}. Setting $\hat Y_k = \hat y_k(\hat X_k)$ (where $(\hat X_k)$ is the  quantization of the Euler scheme $(\bar X_{t_k})$), for every $k \in \{ 0, \cdots,n\}$, this quantized BSDE scheme reads  (see~\cite{PagSagBSDE}) as
\begin{equation*}
   \left \{ \begin{array}{l}
   \hat y_{n}(x_n^i)  =     h( x_{n}^i) \hspace{6.04cm }  x_n^i  \in\Gamma_n\\
   \hat y_{k}(x_k^i)   = \hat {\alpha}_k(x_k^i)   + \Delta_n f \big(t_k, x_{k}^i, \hat {\alpha}_k(x_k^i) , \hat {\beta}_k(x_k^i)  \big) \qquad  \   x_k^i \in \Gamma_k
    \end{array}  
 \right.
 \end{equation*}
 where for $\ k=0, \ldots,n-1$, for $i \in I_{k}$,
\begin{equation}
\hat{\alpha}_k(x_k^i) = \sum_{j \in I_{k+1}}  \hat y_{k+1}(x_{k+1}^j) \,  p_k^{ij} \quad \textrm{ and } \quad \hat {\beta}_k(x_k^i)   =  \frac{1}{\sqrt{\Delta_n}} \sum_{j\in I_{k+1}}    \hat y_{k+1}(x_{k+1}^j) \, \Lambda_k^{ij},
\end{equation}
with 
\[
p_k^{ij} = \mathbb P(\hat X_{k+1} = x_{k+1}^j \vert \hat X_{k} = x_k^i) \quad  \textrm{  and  }  \quad   \Lambda_{k}^{ij}    =  \mathbb{E}\big(Z_{k+1}\mathbb { 1}_{\{ \hat X_{k+1}=x_{k+1}^j  \}}\big \vert  \hat X_k=x_k^i \big).
\]
We will show further on how to compute the previous quantities from the Markovian product quantization method and using (semi)-closed formulas.

  We also compute  the  (transition) distribution of each component  of the product  quantizations.  Indeed,  we show in Proposition \ref{PropTransComponent} that for every $\ell \in \{1, \ldots,d\}$ and for every   $j_{\ell} \in \{1, \ldots,N_{k+1}^{\ell} \}$,  the transition probability $\mathbb P(\tilde X_{k+1}^{\ell} \in C_{j_{\ell}}(\Gamma_{k+1}^{\ell})\vert \hat X_{k}  =x_k^{i})$ is given by
\begin{eqnarray}  \label{EqProbaCondComp_iIntro}
 \mathbb P\big(\tilde X_{k+1}^{\ell} = x_{k+1}^{\ell j_{\ell}} \vert \hat X_{k}  = x_k^{i} \big) 
 & =&  \Phi_0 \Bigg(  \frac{x_{k+1}^{\ell,j_{\ell}+1/2} - m_k^{\ell}(x_k^{i})}{\sqrt{\Delta} \, \vert  \sigma_k^{\ell {\scriptscriptstyle \bullet}}(x_k^{i}) \vert_{_2}}  \Bigg) -  \Phi_0 \Bigg(  \frac{x_{k+1}^{\ell,j_{\ell}-1/2} - m_k^{\ell}(x_k^{i})}{\sqrt{\Delta} \,  \vert   \sigma_k^{\ell {\scriptscriptstyle \bullet}}(x_k^{i}) \vert_{_2}}  \Bigg), \qquad 
\end{eqnarray}
where $m_k^{\ell}(x) = x + b(t_k,x) \Delta$ and $\vert   \sigma_k^{\ell {\scriptscriptstyle \bullet}}(x) \vert_{_2}$ is the Euclidean norm of the $\ell$-th row of the volatility matrix $\sigma(t_k,x)$, for $x \in \mathbb R^d$. We deduce immediately the formulas for the  probabilities $\mathbb P(\tilde X_{k+1}^{\ell} \in C_{j_{\ell}}(\Gamma_{k+1}^{\ell}))$, $k=0, \ldots, n-1$,  $j_{\ell} \in \{1, \ldots, N_{k+1}^{\ell} \}$ using  \eqref{EqProbaCondComp_iIntro} (and \eqref{EqWeightsIntro}). 

Equation \eqref{EqProbaCondComp_iIntro}  allows us to approximate the price of the call in the Heston model by  
\begin{equation}  \label{EqApproxHestonCall}
 e^{-rT} \sum_{j_1 = 1}^{N_n^1} H(x_n^{1j_1}) \, \mathbb P(\hat X_n^1 = x_n^{1j_1}).
 \end{equation}
 
Another important issue form the analytical point of view is to compute the quantization error  bound associated to the Markovian quantization process.  Using some results from \cite{PagSagMQ}, we show  (in particular, when  $N_k^{\ell} =N_k$, for avery $\ell=1, \ldots,d$) that  for any sequence $(\hat X_{k}^{\Gamma_k})_{0 \le k \le n}$ of (quadratic) Markovian product quantization  of  $(\tilde X_{k})_{0 \le k \le n}$, the quantization error  $\Vert \bar X_{k} -  \hat X_{k}^{\Gamma_k}  \Vert_{_2}$, at  step $k$ of the recursion,   is bounded by the cumulative quantization errors  $ \Vert \tilde X_{k'} -  \hat X_{k'}^{\Gamma_{k'}}  \Vert_{_2}$, for $k'=0, \ldots, k$. More precisely, one shows that  for  every $k=0, \ldots, n$, for   any $\eta\!\in (0,1]$,
 \begin{equation*} 
  \Vert  \bar X_k -  \hat X_k^{\Gamma_k} \Vert_{_2}  \le     \sum_{k'=0}^{k}  a_{k'} (b,\sigma,  k, d, \Delta, x_0,\eta)  N_{k'} ^{-1/d},
 \end{equation*}
where $a_{k'}(b,\sigma,k, d, \Delta, x_0,\eta)$ is a positive real constant depending on $b$, $\sigma$, $\Delta$, $x_0$, $\eta$ (see Theorem~\ref{TheoConvergenceRate} further on for a more precise statement).


The paper is organized as follows: in Section \ref{SecOptiQuant} we recall some basic results on optimal quantization. Section \ref{SectMQ} is the main part of this paper. We present the algorithm  and  show the Markov property  of the product quantization of the Euler scheme of a diffusion process. Then, we show how to compute the weights and transition probabilities  associated to the product quantizers and to its components.  We also show how to compute  the optimal quantizers associated to each component  of the Euler scheme (keep in mind that this is the foundation of our method). Finally, we provide  in  Theorem~\ref{TheoConvergenceRate} some a priori error bounds for the quantization error associated to the Markovian product quantization and show that, at every step discretization step  $t_k$, this error is a cumulated (weighted) sum of the regular quantization errors, up to time $t_k$.  In Section \ref{SectNum}, we present some numerical results for the pricing of a European call Basket option and a European call option in the Heston model, as well for the approximation of BSDEs.

\bigskip 
\noindent {\sc Notations}.  We denote by ${\cal M}(d,q,\mathbb R)$, the set of $d \times q$ real value matrices.  If $A =[a_{ij}] \in {\cal M}(d,q,\mathbb R)$,  $A^{\star}$ denotes its  transpose  and we define the norm   $\Vert  A \Vert := \sqrt{{\rm Tr}(AA^{\star})} = (\sum_{i,j} a_{ij}^2)^{1/2}$, where ${\rm Tr}(M)$ stands for the trace of $M$, for $M \in {\cal M}(d,d,\mathbb R)$. For every $f:\mathbb R^d \to {\cal M}(d,q,\mathbb R) $,  we will set $[f]_{\rm Lip}= \sup_{x\neq y}\frac{\Vert f(x)-f(y) \Vert}{\vert x-y \vert}$.  For $x, y \in \mathbb R$, $x \vee y  = \max(x,y) $. If $x \in \mathbb R^d$, $\vert x \vert_{_2}$ will stand for the Euclidean norm on $\mathbb R^d$.  For every vectors $x, y$, the notation  $\prods{x}{y}$ denotes the dot product of the vectors $x$ and $y$. For a given vector $z \in \mathbb R^q$ and a matrix $M \in {\cal M}(d,q,\mathbb R)$, $z^i$ denotes the component $i$ of $z$,  $z^{(j:k)}$  the vector  made up from  the component $j$ to the component $k$ of the vector  $z$ and $M^{(i,j:k)}$ is the vector  made up from  the component $j$ to the component $k$ of the  $i$-th row of  the matrix $M$ and $M^{ij}$ for the component  $(i,j)$ of the matrix $M$. The notation $M^{i{\scriptscriptstyle \bullet}}$ stands for the $i$-th row of $M$. The function $\Phi_0$ will denote the cumulative distribution function of the standard real valued Normal distribution and  its derivative $\Phi_0'$ will stand for its density function.

\section{Brief background  on optimal quantization}  \label{SecOptiQuant}
 Let $(\Omega,\mathcal{A},\mathbb{P})$ be a  probability space and  let   $X : (\Omega,\mathcal{A},\mathbb{P}) \longrightarrow  \mathbb{R}^{d} $ be a random variable with  distribution $\mathbb{P}_X$.  The $L^r$-optimal quantization  problem at level  $N$ for the random vector $X$ (or for  the distribution $\mathbb P_X$)  consists in finding  the best approximation of  $X$  by a Borel function  $\pi(X)$ of  $X$  taking  at most  $N$ values. Assuming that $X \in L^r(\mathbb{P})$, we associate to every Borel function $\pi(X)$ taking at most $N$ values,  the $L^r$-mean  error  $\Vert  X - \pi(X)\Vert_r$ measuring the distance between the two random vectors $X$ and $\pi(X)$ w.r.t. the mean  $L^r$-norm,  
 where ${\Vert X \Vert}_r := {\left(\mathbb E \vert X \vert ^r \right)}^{1/r}$ and  $ \vert \cdot \vert $ denotes an Enclidean   norm on $\mathbb{R}^d$. Then finding the best approximation of $X$  by a Borel function   of  $X$  taking  at most  $N$ values  turns out to solve  the following minimization problem: 
$$ e_{N,r}(X) = \inf{\{ \Vert X - \pi(X) \Vert_{r}, \pi: \mathbb{R}^d \rightarrow  \Gamma, \Gamma \subset \mathbb{R}^d,  \vert \Gamma \vert \leq N \}},$$
where $\vert A \vert $ stands for the cardinality of $A$, for $A \subset \mathbb R^d$.  Now, let $\Gamma = \{x_1,\cdots,x_N  \}\subset \mathbb{R}^d$ be a codebook of size $N$ (also called  an $N$-quantizer or a grid of size $N$) and define a Voronoi partition  $C_i(\Gamma)_{ i=1,\cdots,N}$ of $\mathbb{R}^d$, which is a Borel partition of  $\mathbb R^d$  satisfying for every $i \in \{1,\cdots,N\}$,  $$ C_i(\Gamma) \subset  \big\{ x \in \mathbb{R}^d : \vert x-x_i \vert = \min_{j=1,\cdots,N}\vert x-x_j \vert \big\}.$$ 
 Consider the Voronoi quantization  of $X$ (or simply  quantization of $X$) by the $N$-quantizer $\Gamma$ defined by 

$$ 
\hat{X}^{\Gamma} = \sum_{i=1}^N  x_i  \mathds{1}_{\{X \in C_i(\Gamma)\}} . 
$$ 
 
Then, for any Borel function $\pi: \mathbb{R}^d \rightarrow \Gamma= \{x_1,\cdots,x_N \}$ we have
$$ \vert X -\pi(X) \vert \geq \min_{i=1,\cdots,N} d(X,x_i) = d(X,\Gamma)=\vert X - \hat{X}^{\Gamma} \vert  \quad \mathbb{P} \textrm{  a.s}  $$  so that the optimal  $L^r$-mean quantization error  $e_{N,r}(X) $ reads
\begin{eqnarray}
 e_{N,r}(X)  & = & \inf{\{ \Vert X - \hat{X}^{\Gamma} \Vert_r, \Gamma \subset \mathbb{R}^d,  \vert \Gamma \vert  \leq N \}}  \nonumber \\
  & = &  \inf_{ \substack{\Gamma  \subset \mathbb{R}^d \\ \vert \Gamma \vert  \leq N}} \left(\int_{\mathbb{R}^d} d(z,\Gamma)^r d\mathbb P_X(z) \right)^{1/r}. \label{er.quant}
 \end{eqnarray}
 
 Recall that  for every  $N \geq 1$, the infimum in $(\ref{er.quant})$ is attained  at  least  one  codebook. Any $N$-quantizer realizing this infimum  is called an  $L^r$-optimal $N$-quantizer. Moreover, when   $ \vert {\rm supp}(\mathbb P_X))  \vert \geq N$ then  any $L^r$-mean  optimal $N$-quantizer  has exactly size   $N$ (see \cite{GraLus} or \cite{Pag}). On the other hand,    the   quantization error, $e_{N,r}(X)$, decreases to zero as the grid size $N$ goes to infinity and its rate of convergence is ruled by the so-called Zador Theorem  recalled  below.  There also  is a non-asymptotic upper bound for optimal quantizers. It is called Pierce Lemma (we recall it below for the quadratic case) and will allows us  to put a finishing  touches to the  proof of the main result of the paper, stated in  Theorem \ref{TheoConvergenceRate}.

 \begin{thm}  \label{ThmPierceZad}  (a)  Sharp asymptotic rate ({\bf Zador} Theorem, see \cite{GraLus, Zad}).  Let $X$ be an $\mathbb R^d$-valued random  vector  such that  $\mathbb{E}\vert X \vert^{r+ \eta} < + \infty \textrm{ for some } \eta >0 $ and  let $\mathbb P_X= f \cdot \lambda_d + P_s$ be the Lebesgue decomposition of $\mathbb P_X$ with respect to the Lebesgue measure $\lambda_d$  and $P_s$ denotes its singular   part. Then 
 \begin{equation}
  \lim_{N \rightarrow +\infty} N^{\frac{1}{d}} e_{N,r}(P) = \tilde Q_r(\mathbb P_X)
  \end{equation}
where $$ \tilde Q_r(\mathbb P_X) = \tilde J_{r,d} \left( \int_{\mathbb{R}^d} f^{\frac{d}{d+r}} d\lambda_d \right)^{\frac{1}{r} +\frac{1}{d}} = \tilde J_{r,d} \ \Vert f \Vert_{\frac{d}{d+r}}^{1/r} \ \in [0,+\infty) $$
 $$ 
 \tilde J_{r,d} = \inf_{N \geq 1} N^{ \frac{1}{d}} e_{N,r} (U([0,1]^d)) \in (0,+\infty)
 $$
with $ U([0,1]^d) $ denotes the uniform distribution over the hypercube $[0,1]^d$.  

\noindent (b) Non-asymptotic bound  ({\bf Pierce} Lemma, see \cite{GraLus, LusPag2}).   Let $\eta>0$. There exists  a universal constant  $K_{2,d,\eta}$  such that  for every   random vector $X:(\Omega,{\cal A}, \mathbb P) \rightarrow \mathbb R^d$,  
 \begin{equation}  \label{LemPierce} 
\inf_{\vert  \Gamma  \vert  \leq N} \Vert   X   - \hat X^{\Gamma}  \Vert_{_2}   \le  K_{2, d,\eta} \, \sigma_{2,\eta}(X) N^{-\frac{1}{d}}
 \end{equation} 
 where
 $$  \sigma_{2,\eta}(X) = \inf_{\zeta \in \mathbb R^d} \Vert X-\zeta  \Vert_{_{2+\eta}} \le +\infty.$$
\end{thm}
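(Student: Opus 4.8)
The plan is to treat the two assertions separately, since (a) is an asymptotic statement identifying a precise constant while (b) is a crude but uniform non-asymptotic bound; both are classical, and I would follow the scheme of \cite{GraLus}.

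For the Zador part (a), I would proceed by successive reductions. First I would establish the statement for the uniform distribution $U([0,1]^d)$, which is what defines the constant $\tilde J_{r,d}$: the existence and finiteness of $\tilde J_{r,d}=\lim_N N^{1/d}e_{N,r}(U([0,1]^d))$ follows from a renormalization argument, tiling $[0,1]^d$ by $m^d$ congruent subcubes and placing $n$ scaled optimal points in each, which yields $e_{m^d n,r}(U([0,1]^d))\le \tfrac{1}{m}\,e_{n,r}(U([0,1]^d))$; after renormalization this shows that $N^{1/d}e_{N,r}$ is nonincreasing along such refinements, so the limit exists and equals the infimum $\tilde J_{r,d}$, positivity coming from a volume/covering lower bound. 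Next I would pass to densities $f$ constant on each cube of a dyadic mesh: by scaling the uniform result on each cube and summing, $N^{1/d}e_{N,r}$ converges to $\tilde J_{r,d}\,\|f\|_{d/(d+r)}^{1/r}$, the allocation of the $N$ points across cubes being chosen proportionally to $f^{d/(d+r)}$ (this is exactly where the exponent $d/(d+r)$ is forced, via a discrete H\"older/Jensen optimization). Then I would remove the compactness and regularity assumptions: $L^{d/(d+r)}$-approximation of $f$ by step densities handles the bounded absolutely continuous case, and the moment hypothesis $\mathbb E|X|^{r+\eta}<\infty$ is used precisely to dominate the tail contribution so that Fatou and dominated-convergence arguments close the estimate from both sides. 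Finally, the singular part $P_s$ is shown not to contribute: since it charges a $\lambda_d$-null set, its mass can be covered by arbitrarily few cells of negligible total quantization cost relative to $N^{-r/d}$, so it is absorbed into the error terms.

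For the Pierce part (b), I would give an explicit (non-optimal) grid of size $\le N$ and estimate its quadratic error. By translation invariance of the infimum I may replace $X$ by $X-\zeta$ and thus aim at a bound in terms of $\|X\|_{2+\eta}$, optimizing over $\zeta$ at the end to produce $\sigma_{2,\eta}(X)$. I would decompose $\mathbb R^d$ into the dyadic shells $\{2^{m}\le|x|<2^{m+1}\}$, cover the central ball by a uniform cubic grid, cover each shell by a uniform grid whose mesh is proportional to the shell radius $2^m$, and split the budget of $N$ points across shells as $N_m$. The local error on the $m$-th shell is controlled by $2^m N_m^{-1/d}$ times the mass of that shell, while by Markov's inequality this mass is at most $\|X\|_{2+\eta}^{2+\eta}\,2^{-m(2+\eta)}$. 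Summing the shell contributions and optimizing the allocation $N_m$ subject to $\sum_m N_m\le N$ yields a geometric series that converges thanks to the strict gain $\eta>0$, producing the announced bound $K_{2,d,\eta}\,\sigma_{2,\eta}(X)\,N^{-1/d}$ with a constant depending only on $d$ and $\eta$.

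I expect the main obstacle in (a) to be the tail/approximation step for unbounded support: one must show that truncation and the step-density approximation of a general $f\in L^{d/(d+r)}$ do not perturb the limiting constant, and this is exactly where the strict moment gain $\eta>0$ is indispensable, since the bare $r$-th moment does not suffice to control the contribution of far-away mass to the $L^r$ quantization cost. For (b) the delicate point is the bookkeeping of the geometric allocation: the shell radii grow like $2^m$ while the per-shell error must decay fast enough for the series to converge, and balancing these against the constraint $\sum_m N_m\le N$ while keeping the final constant universal (independent of $X$) is the crux of the argument.
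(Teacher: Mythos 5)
The paper does not actually prove Theorem \ref{ThmPierceZad}: it is recalled as classical background, with part (a) cited from Graf--Luschgy and Zador and part (b) from Graf--Luschgy and Luschgy--Pag\`es, so there is no internal proof to compare yours against. Your sketch correctly reproduces the standard argument of those references: the self-similar tiling bound $e_{m^d n,r}(U([0,1]^d))\le m^{-1}e_{n,r}(U([0,1]^d))$ giving existence and positivity of $\tilde J_{r,d}$, the H\"older-driven allocation of points proportional to $f^{d/(d+r)}$ for step densities (which is indeed where that exponent comes from), the truncation and approximation step where the moment gain $\eta>0$ is used, the vanishing contribution of the singular part, and, for Pierce, the dyadic-shell covering with Markov's inequality and a geometric budget allocation.

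Two points deserve care if you were to write this out in full. In (a), your outline is construction-based and thus mostly delivers the upper bound; the matching lower bound $\liminf_N N^{1/d} e_{N,r} \ge \tilde J_{r,d}\,\Vert f\Vert_{d/(d+r)}^{1/r}$ requires a separate superadditivity/covering argument (approximating $f$ from below and localizing), not merely Fatou and dominated convergence as stated. In (b), your shell decomposition at absolute radii $2^m$ is not scale invariant, whereas the asserted inequality is homogeneous of degree one in $X$: as written, the central-ball term produces an additive contribution of order $N^{-1/d}$ that is not dominated by $\sigma_{2,\eta}(X)$ when the latter is small. The standard fix is to dispose first of the degenerate case $\sigma_{2,\eta}(X)=0$ (then $X$ is a.s.\ constant and one point suffices) and otherwise reduce by homogeneity to the normalized case $\sigma_{2,\eta}(X)=1$ before running the shell argument; with that one-line reduction your proof of (b) is complete and yields a constant depending only on $d$ and $\eta$, as required.
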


 From the Numerical Probability point of view, finding an optimal $N$-quantizer  $\Gamma$ may be a challenging task. In practice (we will only consider the quadratic case, i.e. $r=2$ for numerical implementations)  we are sometimes led to find some ``good'' quantizations  $\hat X^{\Gamma}$ which are close to $X$ in distribution, so that  for every continuous  function $f: \mathbb R^d \rightarrow  \mathbb R$, we can approximate $\mathbb E f(X)$ by  
\begin{equation} \label{QuantProcedureEstim}
\mathbb{E}  f \big(\hat{X}^{\Gamma} \big) = \sum_{i=1}^N p_i f(x_i)
\end{equation}
where $p_i =\mathbb{P}( \hat{X}^{\Gamma} = x_i ).$  When we approximate $\mathbb E f(X)$ by \eqref{QuantProcedureEstim}, this induced an error  which bound depends on the regularity of the function $f$ (see e.g. \cite{PagPri03} for more details).    

We recall below  the stationarity property for a  quantizer.

\begin{defn} A quantizer  $\Gamma = \{ x_1,\ldots,x_N \}$  of size $N$ inducing the Voronoi quantization $\hat{X}^{\Gamma}$  of $X$  is  stationary  if  
 $\mathbb{P}\left(X\!\in \cup_{i} \partial  C_i(\Gamma) \right) =0$,   $\mathbb{P}\left(X\!\in   C_i(\Gamma) \right) >0$, $i=1,\ldots,N$,  and
\begin{equation}
   \mathbb{E} \big(X  \vert  \hat{X}^{\Gamma} \big) =\hat{X}^{\Gamma} \; \mathbb P\mbox{-}a.s.   \quad \Longleftrightarrow  \quad x_i = \frac{\mathbb E(X\mathbb {1}_{\{X\!\in C_i(\Gamma)\}})}{\mathbb P(X\!\in C_i(\Gamma))},\; i=1,\ldots,N.
\end{equation}
 \end{defn}
 The notion of stationarity is  related to the critical point of the so-called {\em distortion} function defined on $(\mathbb R^d)^N$  by  
\begin{equation}\label{EqDistor}
 D_{N,2} (x)  =  \mathbb E \big(\min_{1\le i\le N}|X-x_i|^2\big)= \int_{\mathbb R^d}|\xi-x_i|^2\mathbb P_{X}(d\xi), \quad x=(x_1,\ldots,x_{_N})\!\in (\mathbb R^d)^N.
  \end{equation}
As any grid of size at most $N$ can be ``represented'' by some $N$-tuples (by repeating, if necessary, some of its  elements), we will often put grids of all size $N$ as an argument of the distortion function $D_{2,N}$ as well as for its gradient and Hessian matrix  when its Voronoi boundary is negligible.  It is also clear, from the definition of the quantization error, that  
 \[
 e_{N,2}^2(X) =\!\inf _{(x_1,\ldots,x_N)\in (\mathbb R^d)^N} D_{N,2}(x_1,\ldots,x_N).
 \]
 Furthermore, the function $D_{N,2}$ is continuous and  differentiable at any $N$-tuple having pairwise distinct components with a $\mathbb P$-negligible Voronoi partition boundary and the following result  makes this more precise. 
 \begin{prop} \label{PropDifferentiability}(see~\cite{GraLus, Pag})
 $(a)$ The function $D_{N,2}$ is differentiable at any $N$-tuple $(x_1,\ldots,x_{_N}) \!\in (\mathbb R^{d})^N$ having pairwise distinct components   and such that $\mathbb{P}\left(X\!\in \cup_{i} \partial  C_i(\Gamma) \right)=0$. Furthermore, we have
 \begin{eqnarray}
  \nabla D_{N,2}(x_1,\ldots,x_{_N})  &= & 2 \Big(   \!\int_{C_i(\Gamma ) }  (x_i - x ) d \mathbb P_X (x)  \Big)_{i=1,\ldots,N}\\
  &=& 2 \Big(\mathbb P(X\!\in C_i(\Gamma))x_i-\mathbb E(X\mathds{1}_{\{X\!\in C_i(\Gamma)\}}) \Big)_{i=1,\ldots,N}.
 \end{eqnarray}
  
  \smallskip
  \noindent $(b)$ A grid $\Gamma= \{x_1,\ldots,x_{_N}\}$ of full size $N$ is stationary if and only if 
  \begin{equation}\label{eq11}
  \mathbb{P}\left(X\!\in \cup_{i} \partial  C_i(\Gamma) \right) =0 \quad \mbox{ and }\quad    \nabla D_{N,2} (\Gamma) = 0.
\end{equation}

\smallskip 
  \noindent   $(c)$  If the support of $\mathbb P_{_X}$ has at least $N$ elements, any $L^2$-optimal quantizer at level $N$ has full size and a $\mathbb P$-negligible Voronoi boundary. Hence it is a  stationary $N$-quantizer.  
\end{prop}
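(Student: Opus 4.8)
\emph{Part (a).} The plan is to write $D_{N,2}(x)=\int_{\mathbb R^d} g(x,\xi)\,d\mathbb P_X(\xi)$ with $g(x,\xi)=\min_{1\le i\le N}|\xi-x_i|^2$ and to differentiate under the integral sign. First I would observe that for every $\xi$ lying outside the Voronoi boundary $\cup_i\partial C_i(\Gamma)$ the nearest codepoint is unique, so near such a $\xi$ the map $x\mapsto g(x,\xi)$ coincides with $|\xi-x_i|^2$ for the unique active index $i$; hence it is differentiable there with $\partial_{x_i}g(x,\xi)=2(x_i-\xi)\mathbf 1_{\{\xi\in C_i(\Gamma)\}}$ and $\partial_{x_j}g=0$ for $j\ne i$. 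To pass the derivative inside the integral I would use that $\min$ is $1$-Lipschitz in each of its arguments, so perturbing only $x_i$ by $h\in\mathbb R^d$ gives $\big|g(\dots,x_i+h,\dots;\xi)-g(\dots,x_i,\dots;\xi)\big|\le\big||\xi-x_i-h|^2-|\xi-x_i|^2\big|\le 2|h|\,|\xi-x_i|+|h|^2$. The corresponding difference quotients are therefore dominated by $2|\xi|+2|x_i|+1$ for $|h|\le1$, which is $\mathbb P_X$-integrable since $X\in L^2$. Dominated convergence then yields the stated gradient, the hypothesis $\mathbb P(X\in\cup_i\partial C_i(\Gamma))=0$ ensuring the boundary contributes nothing; the second (barycentric) expression is an algebraic rewriting of $\int_{C_i(\Gamma)}2(x_i-\xi)\,d\mathbb P_X$.

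\emph{Part (b).} This is then immediate from (a). Reading off the gradient, the condition $\nabla D_{N,2}(\Gamma)=0$ is equivalent to $x_i\,\mathbb P(X\in C_i(\Gamma))=\mathbb E\big(X\mathbf 1_{\{X\in C_i(\Gamma)\}}\big)$ for every $i$, i.e.\ $x_i=\mathbb E(X\mid X\in C_i(\Gamma))$ on each charged cell, which is exactly the self-consistency identity $\mathbb E(X\mid\hat X^\Gamma)=\hat X^\Gamma$ appearing in the definition of stationarity. Adjoining the common hypothesis that the Voronoi boundary is negligible yields the claimed equivalence; for a full-size grid the only point to watch is that charged cells carry the barycentre relation, which is precisely guaranteed by the positive-mass requirement in the definition.

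\emph{Part (c).} Here I would invoke the variational structure of $L^2$-optimal quantizers. If the support of $\mathbb P_X$ has at least $N$ points but an optimal $N$-quantizer had strictly fewer than $N$ distinct values, or a cell of zero mass, then relocating a redundant codepoint into a charged region and refining the partition strictly decreases the distortion, contradicting optimality; the same relocation argument shows every cell is charged, so the quantizer has full size with all cells of positive mass. The genuinely delicate step, which I expect to be the main obstacle, is to show that the Voronoi boundary is $\mathbb P_X$-negligible at an optimum: the boundaries lie on finitely many bisecting hyperplanes, and if one of them carried positive mass, an infinitesimal displacement of a single centre together with the reassignment of that mass to the now strictly closer centre would strictly lower the distortion, again contradicting optimality. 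Once full size and a negligible boundary are established, part (a) makes $D_{N,2}$ differentiable at the optimum; being a global minimiser over $(\mathbb R^d)^N$ it is a critical point, so $\nabla D_{N,2}=0$, and part (b) upgrades this to stationarity.

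Besides the null-boundary claim in (c), everything is routine: the interchange of derivative and integral in (a) rests only on the $L^2$-domination above, and (b) is purely formal.
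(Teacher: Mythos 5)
The paper itself gives no proof of this proposition: it is quoted from the literature (see \cite{GraLus, Pag}), so your argument has to be measured against the standard proofs there rather than against anything in the text.

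Your parts (a) and (b) follow the classical route and are essentially sound, with two remarks. In (a) your domination argument perturbs only the single component $x_i$, which yields the partial derivatives but not, by itself, the differentiability asserted in the statement; the patch is immediate, though: for $\mathbb P_X$-a.e.\ $\xi$ (unique nearest centre) the map $x\mapsto\min_i|\xi-x_i|^2$ is differentiable at the given $N$-tuple \emph{jointly} in all components, and the same quadratic bound dominates the full increment, so dominated convergence gives Fr\'echet differentiability, not just partials. In (b), note that with the paper's definition of stationarity (which demands $\mathbb P(X\in C_i(\Gamma))>0$ for \emph{every} $i$), the implication ``$\nabla D_{N,2}(\Gamma)=0$ plus negligible boundary $\Rightarrow$ stationary'' silently requires every cell to be charged: a zero-mass cell makes its gradient component vanish for free, so the equivalence is not ``purely formal'' under that definition. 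This is a mismatch of conventions in the quoted statement more than a flaw of your argument, but it deserves a flag rather than a dismissal.

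The genuine gap is in part (c), exactly at the step you single out: negligibility of the Voronoi boundary at an optimum. Your argument --- ``displace one centre infinitesimally and reassign the tied mass to the now strictly closer centre, which strictly lowers the distortion'' --- does not work as stated. Reassignment alone never changes the distortion, since $\int\min_k|\xi-x_k|^2\,d\mathbb P_X(\xi)$ is insensitive to how tied mass is split; and the displacement of $x_i$ by $tu$ also changes, at first order in $t$, the contribution of all the \emph{non}-tied mass of its cell. The net first-order variation is $-2t\big[\,u\cdot\int_{C_i^{o}}(\xi-x_i)\,d\mathbb P_X(\xi)+\int_{A_{ij}}\big(u\cdot(\xi-x_i)\big)_{+}\,d\mathbb P_X(\xi)\big]$, where $C_i^{o}$ is the open cell and $A_{ij}$ the tie set: if $x_i$ is not the barycentre of $C_i^{o}$, the first term can be positive and dominate, and no contradiction follows. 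The argument can be repaired: first use optimality to show $\int_{C_i^{o}}(\xi-x_i)\,d\mathbb P_X(\xi)=0$ (otherwise moving $x_i$ toward that barycentre already strictly decreases the distortion), and only then does the boundary term --- strictly positive for a suitable direction $u$, because every point of $A_{ij}$ lies at distance at least $|x_i-x_j|/2>0$ from $x_i$ --- force a strict first-order decrease. The classical proof for $r=2$ in \cite{GraLus} avoids perturbations altogether: optimality yields the centroid condition $x_i=\mathbb E(X\mid X\in C_i)$ for \emph{every} Borel tie-breaking of the boundary; applying it once with $A_{ij}$ assigned to cell $i$ and once with $A_{ij}$ assigned to cell $j$ shows that the barycentre of $A_{ij}$ equals both $x_i$ and $x_j$, contradicting $x_i\neq x_j$ whenever $\mathbb P_X(A_{ij})>0$. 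One of these two repairs is needed; as written, your key claim is asserted rather than proved.
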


For numerical  implementations, the search of stationary quantizers  is based on zero search recursive procedures like Newton-Raphson    algorithm for real valued random variables,  and  some algorithms like  Lloyd's I algorithms (see e.g. \cite{GerGra,PagYu}),   the Competitive Learning Vector Quantization (CLVQ) algorithm (see \cite{GerGra}) or stochastic algorithms (see \cite{PagPri03}) in the multidimensional framework.  Optimal  quantization grids associated to  multivariate Gaussian random vectors can  be downloaded  on the  website {\tt www.quantize.math-fi.com}.

\section{Markovian product quantization of an $\mathbb R^d$-valued   Euler process}  \label{SectMQ}
Let $(X_t)_{t \geq 0}$ be a stochastic process taking values in a  $d$-dimensional Euclidean space $\mathbb R^d$ and  solution to  the stochastic differential equation: 
\begin{equation} \label{EqSignalProcess}
 X_t = x_0 + \int_0^t b(s,X_s) ds + \int_0^t\sigma(s,X_s) dW_s,   \quad   x_0 \in \mathbb R^d, 
 \end{equation}
where  $W$ is a  standard  $q$-dimensional  Brownian motion starting  at $0$ and where   $b:[0,T] \times \mathbb R^d \rightarrow  \mathbb R^d$ and the matrix diffusion coefficient function  $\sigma:[0,T] \times \mathbb R^d \rightarrow  {\cal M}(d,q, \mathbb R) $ are measurable and satisfy the global Lipschitz  continuity and linear growth conditions: for every $t \in [0,T]$,
\begin{eqnarray}
& & \vert b(t,x) - b(t,y) \vert    \le  [b]_{\rm Lip} \vert  x-y  \vert    \label{LipAssb} \\ 
&  &  \Vert  \sigma(t,x) - \sigma(t,y)\Vert  \leq  [\sigma]_{\rm Lip} \vert  x-y  \vert  \label{LipAssSig}  \\ 
 &  & \vert b(t,x)   \vert  \leq L (1 +\vert x \vert)   \  \textrm{  and  } \  \Vert \sigma(t,x)   \Vert  \leq  L  (1 +\vert x \vert) \label{EqLinGrowth}
\end{eqnarray}  
for some  $L>0$.  This guarantees the existence and pathwise uniqueness  of  a strong solution of (\ref{EqSignalProcess}), adapted to the (augmented) filtration of $W$. We also suppose that  the matrix $\sigma$  is  positive definite. Throughout  the paper we will suppose that  $\mathbb R^d$ is equipped with the canonical  Euclidean norm.

\subsection{The algorithm and the Markov property of the quantized process}

Recall that the  Euler scheme of the stochastic process $(X_t)_{t \geq 0}$ is defined recursively from the following procedure:
\[
\bar X_{t_{k+1}} = \bar X_{t_k} + \Delta  b(t_k,\bar X_{t_k})  + \sigma(t_k,\bar X_{t_k}) (W_{t_{k+1}} - W_{t_k}), \quad \bar X_{0}  \in \mathbb R^d,
\]
where $\Delta=\Delta_n = \frac{T}{n}$ and  $t_k =\frac{k T}{n}$, for every $k \in \{ 0, \cdots,n \}$.  To simplify  notations, we  will often set   $X_k:= X_{t_k}$  to denote the process $X$ evaluated at time  $t_k$. We also set  $b_k(x):= b(t_k,x)$  and $\sigma_k(x) = \sigma(t_k,x)$ for  $ x \in \mathbb R^d$.  Recall also that the Euler operator associated to the conditional distribution of $\bar X_{k+1}$ given $\bar X_k =x$ is defined by  
  \[
  {\cal E}_k(x,z) := x+\Delta b(t_k,x)+\sqrt{\Delta} \sigma(t_k,x) z,\quad  x\!\in \mathbb R^d, \; z \!\in \mathbb R^q
  \]
and that if $\Gamma_{k+1}$ is an $N_{k+1}$-quantizer for $\bar X_{k+1}$, the distortion function  $\bar D_{k+1}$ associated to $\bar  X_{k+1}$  may be written   for every $k=0, \cdots,n-1$, as
   \begin{eqnarray*}
   \bar  D_{k+1}(\Gamma_{k+1}) & = &  \mathbb E \big(({\rm dist}(\bar X_{k+1} ,\Gamma_{k+1})^2 \big) \\
     &=& \mathbb E \big[  {\rm  dist} ({\cal E}_k(\bar X_k,Z_{k+1}), \Gamma_{k+1})^2 \big] \label {EqDistorNonQuant}
  \end{eqnarray*}
  where $Z_{k+1} \sim \mathcal{N}(0;I_q)$ is independent from $\bar X_k$. The previous way to write the distortion function has been used in \cite{PagSagMQ} to propose a fast recursive (and Markovian) quantization of the Euler process (using the Newton-Raphson algorithm for the numerical computation of the optimal grids) when $d=1$.  
  
  Keep in mind that the conditional distribution of the discrete Euler process $\bar X$ is Gaussian and that one of the properties of a Gaussian vector is that any sub-component  of the vector remains a  Gaussian  random vector. So, a natural alternative way to quantize the vector $\bar X_k \in \mathbb R^d$ is to quantize each component $\bar X_k^{\ell}$ by  a grid $\Gamma_k^{\ell}$ of size $N_k^{\ell}$, for $\ell=1, \ldots, d$,  and then to define its  product  quantization  $\hat X_k$ associated  with  the product quantizer  $\Gamma_k = \bigotimes_{\ell=1}^d  \Gamma_k^{\ell}$ of size  $N_k = N_k^1  {\small \times }  \ldots {\small \times }  N_{k}^d$,     as  $\hat X_k =(\hat X_k^1, \ldots, \hat X_k^d)$. 
  
  The question  is now to know how to quantize the  $\bar X_k^{i}$'s. On the other hand, since the components of the vector $\bar X_k$ are not independent it is also a challenging question to know  how to compute  (from closed formula) the companions weights and transition  probabilities associated with the quantizations of the $\bar X_k^{i}$'s and the vector $\bar X_k$.  We describe below the componentwise recursive Markovian  quantization of the process $\{\bar X_k, \ k=0, \ldots,n\}$.  
 \vskip 0.2cm
 
  It is clear that for every $\ell =1, \ldots,d$, and for every $k=0, \ldots, n-1$,  the transition operator ${\cal E}_k^{\ell}(x,z) $  associated with the  distribution of $\bar X_{k+1}^{\ell}$ given $\bar X_k =x$ reads as
   \[
  {\cal E}_k^{\ell}(x,z) := m_k^{\ell}(x)+\sqrt{\Delta} \big(\sigma_k^{\ell {\scriptscriptstyle \bullet}} (x)  \vert z\big),\quad  x\!\in \mathbb R^d, \; z \!\in \mathbb R^q,
  \]
  where
  \[
  m_k^{\ell}(x):=  x^\ell+\Delta b_k (x) .
  \]
 
For every $k =0, \ldots,n$, for every given  $\ell \in \{ 1, \ldots,d \}$, we denote by $\hat X_k^{\ell}$  the quantization of $\bar X_k^{\ell}$ on the  grid $\Gamma_k^{\ell} = \{ x_k^{\ell,i_{\ell}},  i_{\ell} =1, \ldots, N_k^{\ell} \}$. 
  We propose in what follows  a recursive and  componentwise product   quantization  of the process  $\{\bar X_k, \ k=0, \ldots,n \}$. In fact, for every $\ell=1, \ldots,d$, we denote by $\Gamma_k^{\ell}$ an  $N_k^{\ell}$-quantizer (we suppose that we have access to it) of   the $\ell$-th component  $\bar X_k^{\ell}$ of the vector $\bar X_k$ and  by $\hat X_k^{\ell}$, the  resulting quantization  of $\bar X_k^{\ell}$.    Then, we define a componentwise recursive product quantizer $\Gamma_k =  \bigotimes_{\ell=1}^d  \Gamma_k^{\ell}$ of size  $N_k = N_k^1  {\small \times }  \ldots {\small \times }  N_{k}^d$ of  the vector $\bar X_k = (\bar X_k^{\ell})_{\ell=1, \ldots,d}$  by   
  \begin{eqnarray*}
  \Gamma_k &  = & \big \{ (x_k^{1,i_1}, \ldots,x_k^{d,i_d}),   \quad i_{\ell} \in \big\{1, \ldots,N_k^{\ell} \big \}, \  \ell \in \{1, \ldots,d\}  \big\}.  
  \end{eqnarray*}

To define the Markovian product quantization,  suppose  that  $\bar  X_k$ has already been quantized and that we have access to the companion weights $\mathbb P(\hat X_k = x_k^{i})$,      $ i \in I_k$, where $I_k$ and $x_k^{i}$ are defined by Equations  \eqref{Eqmultindex} and \eqref{EqmultindexComp}.   Setting $\tilde X_{k}^{\ell}  =  {\cal E}_k^{\ell}(\hat X_k ,Z_{k+1})$, we may approximate the distortion function $\bar D_{k+1}^{\ell}$ associated to the $\ell$-th component of the vector $\bar X_{k+1}^{\ell}$  by
     \begin{eqnarray*}
  \tilde D_{k+1}^{\ell} (\Gamma_{k+1}^{\ell}) & := &   \mathbb E \big[   {\rm dist}(\tilde X_{k+1}^{\ell}, \Gamma_{k+1}^{\ell})^2 \big] \\
  & = & \mathbb E \big[  {\rm  dist}({\cal E}_k^{\ell}(\hat X_k ,Z_{k+1}), \Gamma_{k+1}^{\ell})^2 \big] \\ 
  &  = &  \sum_{i \in I_k}  \mathbb E \big[   {\rm dist}({\cal E}_k^{\ell}(x_k^{i},Z_{k+1}), \Gamma_{k+1})^2 \big] \mathbb P\big(\hat X_k =x_k^{i} \big). 
 \end{eqnarray*}
 
 This allows us  to consider the sequence of  product recursive  quantizations of  $(\hat X_k)_{k=0,\cdots,n}$, defined for every $k=0, \ldots,n-1$,  by the following recursion: 
%
 \begin{equation}  \label{EqAlgorithm}
   \left \{ \begin{array}{l}
 \tilde X_0 = \hat X_0, \quad  \hat X_k ^{\ell} = {\rm Proj}_{\Gamma_k^{\ell}}(\tilde X_k^{\ell}), \ \ell=1, \ldots, d,   \\
   \hat X_k  =  ( \hat X_k^1, \ldots, \hat X_k^d)  \quad  \textrm{and} \quad  \tilde X_{k+1}^{\ell}  =  {\cal E}_k^{\ell}(\hat X_k,Z_{k+1}), \ \ell=1, \ldots, d, \\
   {\cal E}_k^{\ell} (x,z)= m_k^{\ell}(x) + \sqrt{\Delta} (\sigma^{\ell \bullet}(t_k,x) \vert  z),  \ z = (z^1, \ldots,z^q)\in \mathbb R^q,\\
     x=(x^1, \ldots,x^d), \ b=(b^1, \ldots, b^d)   \textrm{ and } (\sigma^{\ell \bullet}(t_k,x) \vert  z) =\sum_{m=1}^q \sigma^{\ell  m}(t_k,x) z^m. 
 \end{array}  
 \right.
 \end{equation}
 where   $(Z_k)_{k=1,\cdots, n}$ is   $i.i.d.$,  ${\cal N}(0;I_q)$-distributed, independent of   $\bar X_0$.

In the following result, we show   that the sequence $(\hat X_k)_{k \ge 0}$ of  Markovian  and product quantizations is in fact a Markov chain.  Its transition probabilities will be computed further on. 
\medskip 
\begin{rem} \label{PropMarkovChainProper}
We may remark that the process $(\hat X_k)_{k \ge 0}$ is a Markov chain on $\mathbb R^d$.

In fact, setting ${\cal F}_ k^{\hat X}  = \sigma(\hat X_0, \ldots, \hat X_k)$, we have  for any bounded function $f: \mathbb R^d \rightarrow \mathbb R$
\begin{eqnarray*}
\mathbb E(f(\hat X_{k+1})  \vert {\cal F}_ k^{\hat X}) &=& \sum_{j \in I_{k+1}} \mathbb E \left(f(x_{k+1}^j)  \mathds{1}_{ \{ \hat X_{k+1} =x_{k+1}^j  \}} \vert  {\cal F}_ k^{\hat X} \right) \\
& = & \sum_{j \in I_{k+1}} f(x_{k+1}^j)   \mathbb E \left( \mathds{1}_{ \{ \mathcal E_k( \hat X_{k},Z_{k+1}) \in \prod_{\ell=1}^d C_{j_{\ell}}(\Gamma_{k+1}^{\ell})  \}} \vert {\cal F}_ k^{\hat X} \right),
\end{eqnarray*}
where $\mathcal{E}_k(\hat X_k,Z_{k+1})= (\mathcal E_k^1(\hat X_k,Z_{k+1}), \ldots, \mathcal E_k^d(\hat X_k,Z_{k+1}))$.  It follows that
\[
\mathbb E(f(\hat X_{k+1})  \vert {\cal F}_ k^{\hat X}) = \sum_{j \in I_{k+1}} f(x_{k+1}^j)  h_j(\hat X_k),
\]
where for every $x \in \mathbb R^d$,   
\[
h_j (x)  =  \mathbb P \big(  \mathcal E_k(x,Z_{k+1}) \in \prod_{\ell=1}^d C_{j_{\ell}}(\Gamma_{k+1}^{\ell})   \big).
\]
As a consequence, $\mathbb E(f(\hat X_{k+1})  \vert {\cal F}_ k^{\hat X})  = \varphi(\hat X_k)$, so that $(\hat X_k)_{k \ge 0}$  is a Markov chain.
\end{rem}

 Now, for a given componentwise (quadratic)  optimal quantizers $\Gamma_k  = \bigotimes_{\ell=1}^d \Gamma_k^{\ell}$, let us explain   how to compute the   companion  transition  probability  weights associated with the quantizations of the $\bar X_k^{\ell}$'s and the whole  vector $\bar X_k$.  We write all the quantities of interest as an expectation of a function of a standard  $\mathbb R^{q-1}$-valued Normal distribution.  These transformations are the key step of this work. In fact, since the optimal  quantization grids associated to  standard Normal random vectors (up to dimension $10$) and their companion weights  are available  on  {\tt www.quantize.maths-fi.com},  these quantities of interest may be computed instantaneously using a cubature formula.

 \subsection{Computing the companion weights and transition probabilities of the marginal quantizations}  \label{SubsectionWeights}

 First of all we define the following quantities which will be  needed in the sequel. For every $k \in \{0, \ldots, n-1 \}$ and  for every $j \in I_{k+1}$  we set  
\[
 x_{k+1}^{i, j_i-1/2} = \frac{ x_{k+1}^{i,j_i} + x_{k+1}^{i, j_i-1}  }{2}, \  x_{k+1}^{i,j_i+1/2} = \frac{x_{k+1}^{i,j_i} + x_{k+1}^{i, j_i+1}  }{2}, \ \textrm{ with }  x_{k+1}^{i,1/2} = -\infty,  x_{k+1}^{i, N^i_{k+1}+1/2} =+\infty, 
 \]
and if $Z_k^{(2:q)} = z \in \mathbb R^{q-1}$ and $ x \in \mathbb R^d$,  we set  (if $\sigma_k^{i1}(x) >0$)
\begin{eqnarray*}
& &  x_{k+1}^{i,j_i-}(x,z): = \frac{x_{k+1}^{i,j_i-1/2} - m_{k}^i(x) - \sqrt{\Delta} \big(\sigma_k^{(i,2:q)}(x) \vert z \big) }{\sqrt{\Delta} \sigma_k^{i1}(x)  } \\
& \textrm{ and } & x_{k+1}^{i,j_i+}(x,z): = \frac{x_{k+1}^{i,j_i+1/2} - m_{k}^i(x) - \sqrt{\Delta} \big(\sigma_k^{(i,2:q)}(x) \vert z \big) }{\sqrt{\Delta} \sigma_k^{i1}(x)}.
 \end{eqnarray*}  
We also set
\[
\mathbb{J}^0_{k,j_i}(x) = \Big\{ z \in \mathbb R^{q-1}, \quad \sqrt{\Delta} \big(\sigma_k^{(i,2:q)}(x) \big \vert z \big)  \in \big(  x_{k+1}^{i,j_i-1/2} - m_{k}^i(x), x_{k+1}^{i,j_i+1/2} - m_{k}^i(x)  \big) \Big \}
\] 
and
\begin{eqnarray*}
&  & \mathbb{J}^0_{k}(x) = \big\{ i \in \{1, \ldots, d \}, \quad  \sigma_k^{i1} (x) = 0 \big \} \\
& & \mathbb{J}^{-}_{k}(x) = \big\{ i \in \{1, \ldots, d \}, \quad  \sigma_k^{i1} (x) < 0 \big \} \\
&  & \mathbb{J}^{+}_{k}(x) = \big\{ i \in \{1, \ldots, d \}, \quad  \sigma_k^{i1} (x) > 0 \big \}.
\end{eqnarray*}

The following result allows us to compute the weights and the transition probabilities  associated to the quantizations $\widehat X_k$, $k=0, \ldots,n$. 

\begin{prop}  \label{PropTransitionProba}
Let  $\{\widehat X_{k}, k=0, \ldots,n  \}$ be the sequence defined from the algorithm  \eqref{EqAlgorithm}.

  The transition probability $\mathbb P(\widehat X_{k+1} = x_{k+1}^{j} \vert \widehat X_{k}  =x_k^{\ell})$, $i \in I_k$,  $j  \in I_{k+1}$, is given   by 
  \setlength\arraycolsep{1pt}
\begin{eqnarray} \label{EqProbaCondVector}
 \mathbb P\big(\widehat X_{k+1} = x_{k+1}^{j} \vert \widehat X_{k}  = x_k^{i} \big)  & = &  \mathbb E \prod_{ \ell \in \mathbb J^{0}_k(x^{i}_k)} \mbox{\bf{1}}_{\{  \zeta \in \mathbb J^0_{k,j_{\ell}}(x^{i}_k)  \}} \max\big( \Phi_0(\beta_{j} (x_k^{i},\zeta))  -   \Phi_0(\alpha_{j}(x_k^{i},\zeta)),0\big)   \qquad
\end{eqnarray}
where $\zeta \sim {\cal N}(0; I_{q-1})$ and, for every $x \in \mathbb R^d$ and $z \in \mathbb R^{q-1}$,
\begin{eqnarray*}
& & \alpha_{j}(x,z)= \max  \big( \sup_{\ell \in \mathbb J^{+}_{k} (x)} x_{k+1}^{\ell,j_{\ell}-} (x,z),  \sup_{\ell \in \mathbb J^{-}_{k} (x)}  x_{k+1}^{\ell,j_{\ell}+} (x,z)\big) \\
& \textrm{ and } & \beta_{j}(x,z)= \min  \big( \inf_{\ell \in \mathbb J^{+}_{k} (x)} x_{k+1}^{\ell,j_{\ell}+} (x,z),  \inf_{\ell \in \mathbb J^{-}_{k} (x)}  x_{k+1}^{\ell,j_{\ell}-} (x,z)\big).
\end{eqnarray*}

\end{prop}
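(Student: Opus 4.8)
The plan is to begin from the Markov property already established in Remark~\ref{PropMarkovChainProper}, which, using the independence of $Z_{k+1}$ from $\widehat X_k$, identifies the transition probability with $h_j(x_k^i)$ where, for $x\in\mathbb R^d$,
\[
h_j(x)=\mathbb P\Big(\mathcal E_k(x,Z_{k+1})\in\textstyle\prod_{\ell=1}^d C_{j_\ell}(\Gamma_{k+1}^\ell)\Big),\qquad Z_{k+1}\sim\mathcal N(0;I_q).
\]
Since each one-dimensional Voronoi cell is the interval $C_{j_\ell}(\Gamma_{k+1}^\ell)=\big(x_{k+1}^{\ell,j_\ell-1/2},x_{k+1}^{\ell,j_\ell+1/2}\big)$ with the midpoint and $\pm\infty$ conventions fixed above, and its boundary is $\mathbb P$-negligible (the Euler increment has a Gaussian density), the event $\{\mathcal E_k(x,Z_{k+1})\in\prod_\ell C_{j_\ell}\}$ coincides, up to a null set, with the intersection over $\ell=1,\dots,d$ of the scalar constraints
\[
x_{k+1}^{\ell,j_\ell-1/2}<m_k^\ell(x)+\sqrt{\Delta}\,\sigma_k^{\ell 1}(x)\,Z_{k+1}^1+\sqrt{\Delta}\big(\sigma_k^{(\ell,2:q)}(x)\,\big\vert\, Z_{k+1}^{(2:q)}\big)<x_{k+1}^{\ell,j_\ell+1/2}.
\]

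The key step is a conditioning and domain-decomposition argument. I would split the Gaussian vector as $Z_{k+1}=(Z^1,\zeta)$ with $\zeta:=Z_{k+1}^{(2:q)}\sim\mathcal N(0;I_{q-1})$ independent of $Z^1\sim\mathcal N(0,1)$, and isolate the coefficient $\sqrt{\Delta}\,\sigma_k^{\ell 1}(x)$ of $Z^1$ in each constraint. Freezing $\zeta$, every constraint becomes a linear inequality in the single variable $Z^1$, and I would split according to the sign of $\sigma_k^{\ell 1}(x)$, that is, according to whether $\ell\in\mathbb J^+_k(x)$, $\mathbb J^-_k(x)$ or $\mathbb J^0_k(x)$. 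For $\ell\in\mathbb J^+_k(x)$, dividing by the positive quantity $\sqrt{\Delta}\,\sigma_k^{\ell 1}(x)$ gives $x_{k+1}^{\ell,j_\ell-}(x,\zeta)<Z^1<x_{k+1}^{\ell,j_\ell+}(x,\zeta)$; for $\ell\in\mathbb J^-_k(x)$, dividing by the negative coefficient reverses the inequalities and yields the interval with the two endpoints interchanged, namely $x_{k+1}^{\ell,j_\ell+}(x,\zeta)<Z^1<x_{k+1}^{\ell,j_\ell-}(x,\zeta)$; and for $\ell\in\mathbb J^0_k(x)$ the variable $Z^1$ disappears, leaving a constraint on $\zeta$ alone, which is precisely the event $\{\zeta\in\mathbb J^0_{k,j_\ell}(x)\}$.

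Intersecting over all $\ell$, the constraints indexed by $\mathbb J^+_k(x)\cup\mathbb J^-_k(x)$ confine $Z^1$ to the single interval $\big(\alpha_j(x,\zeta),\beta_j(x,\zeta)\big)$, where $\alpha_j$ is the maximum of the lower endpoints and $\beta_j$ the minimum of the upper endpoints — which is exactly the definition of $\alpha_j,\beta_j$ given in the statement — while the constraints indexed by $\mathbb J^0_k(x)$ contribute the factor $\prod_{\ell\in\mathbb J^0_k(x)}\mathbf 1_{\{\zeta\in\mathbb J^0_{k,j_\ell}(x)\}}$ (with the convention that an empty product equals $1$, and $\sup\emptyset=-\infty$, $\inf\emptyset=+\infty$ so that an absent $Z^1$-constraint yields the whole line). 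Because $Z^1$ is standard normal and independent of $\zeta$, the conditional probability given $\zeta$ that $Z^1$ lies in $\big(\alpha_j(x,\zeta),\beta_j(x,\zeta)\big)$ equals $\Phi_0\big(\beta_j(x,\zeta)\big)-\Phi_0\big(\alpha_j(x,\zeta)\big)$ when this interval is nonempty and $0$ otherwise, whence the $\max(\cdot,0)$. Taking the expectation over $\zeta$ by the tower property (Fubini) and setting $x=x_k^i$ then gives formula~\eqref{EqProbaCondVector}.

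The main obstacle is purely the bookkeeping in the sign analysis: one must track carefully that dividing by $\sqrt{\Delta}\,\sigma_k^{\ell 1}(x)$ reverses the inequalities for $\ell\in\mathbb J^-_k(x)$, so that the endpoints $x_{k+1}^{\ell,j_\ell-}$ and $x_{k+1}^{\ell,j_\ell+}$ enter $\alpha_j$ and $\beta_j$ in the correct, swapped roles, and one must treat the degenerate directions $\mathbb J^0_k(x)$ separately since they cannot be normalized. Everything else — the reduction via the Markov property, the interval description of the one-dimensional Voronoi cells, and the final Gaussian computation using independence of $Z^1$ and $\zeta$ — is routine.
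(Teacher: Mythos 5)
Your proposal is correct and follows essentially the same route as the paper's own proof: conditioning on $\zeta = Z_{k+1}^{(2:q)}$, decomposing the indices $\ell$ according to the sign of $\sigma_k^{\ell 1}(x)$ into $\mathbb J^0_k$, $\mathbb J^+_k$, $\mathbb J^-_k$ (with the endpoint swap in the negative case), intersecting the resulting intervals for $Z^1_{k+1}$ to obtain $\big(\alpha_j(x,\zeta),\beta_j(x,\zeta)\big)$, and concluding with the Gaussian computation $\Phi_0(\beta_j)-\Phi_0(\alpha_j)$ truncated at $0$. No gaps; the handling of the degenerate directions and of possibly empty index sets matches the paper's argument.
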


 Before proving this result, remark that we may deduce the probability weights associated to the quantizations $(\hat X_{k+1})$  by
\setlength\arraycolsep{1pt}
\begin{eqnarray} 
\mathbb P \big(\hat X_{k+1}  = x_{k+1}^{j}  \big)  & =&  \sum_{i \in I_k}   \mathbb P\big(\hat X_{k+1} = x_{k+1}^{j} \vert \hat X_{k}  = x_k^{i} \big)     \mathbb P \big(\hat X_{k} = x_k^{i})  \label{EqProbaVector}
\end{eqnarray}
where the conditional probabilities  are computed using the formula \eqref{EqProbaCondVector}.

\begin{proof} 
  We have
\begin{eqnarray*}
 \mathbb P(\widehat X_{k+1} = x_{k+1}^{j} \vert \widehat X_{k}  =x_k^{\ell}) & = &   \mathbb P \Big(\bigcap_{i=1}^d \big\{\widetilde X_{k+1}^i \in \big(v^{i,j_i-}, v^{i,j_i+} \big) \big\}   \big\vert \widehat X_{k}  =x_k^{\ell} \Big) \\
 & = &    \mathbb P \Big(\bigcap_{i=1}^d \big\{{\cal E}_k^i(x_k^{\ell},Z_{k+1}) \in \big(v^{i,j_i-}, v^{i,j_i+} \big)  \big\}  \Big) \\
 & = &   \mathbb E\Big( \mathbb E \Big(  \mbox{\bf{1}}_{\bigcap_{i=1}^d \big\{{\cal E}_k^i(x_k^{\ell},Z_{k+1}) \in \big(v^{i,j_i-}, v^{i,j_i+} \big) \big\}}  \Big) \big\vert Z_{k+1}^{(2:d)} \Big) \\
 & = &  \mathbb E\big(  \Psi(x_k^{\ell},Z_{k+1}^{(2:d)}) \big)
\end{eqnarray*}
where, for every $u \in \mathbb R^{q-1}$,
\begin{eqnarray*}
\Psi(x,u)  &=&  \mathbb P \Big( \bigcap_{i=1}^d \big\{m_k^i(x_k^{\ell}) + \sqrt{\Delta}\sigma_k^{i1} (x) Z_{k+1}^{1} +  \sqrt{\Delta} \big(\sigma_k^{(i,2:q)}(x) \vert u \big)  \in \big(v^{i,j_i-}, v^{i,j_i+} \big)  \big\}\Big).
\end{eqnarray*}
Let us set 
\[
A_{i,k} = \Big\{m_k^i(x_k^{\ell}) + \sqrt{\Delta}\sigma_k^{i1} (x) Z_{k+1}^{1} +  \sqrt{\Delta} \big(\sigma_k^{(i,2:q)}(x) \vert u \big)  \in \big(v^{i,j_i-}, v^{i,j_i+} \big)  \Big\}.
\]
We know that  if  $i \in \mathbb J^{0}_k(x)$ then   $A_{i,k}  =  \{ u \in  \mathbb J^0_{k,j_i}(x) \}$
and we deduce that
\[
\Psi(x,u)  = \prod_{ i_0 \in \mathbb J^{0}_k(x) } \mbox{\bf{1}}_{\{ u \in  \mathbb J^0_{k,j_i}(x)\} } \mathbb P\Bigg(  \big( \bigcap_{i_{-} \in  \mathbb J^{-}_{k} (x) } A_{i_{-},k} \big) \cap  \big( \bigcap_{i_{+} \in  \mathbb J^{+}_{k} (x) } A_{i_{+},k} \big)  \Bigg).
\]
Furthermore, notice that   if  $i_{+} \in  \mathbb J^{+}_{k} (x)$ then
\[
 A_{i_{+},k}  = \big \{ Z_{k+1}^1  \in   (x_{k+1}^{i,j_i-} (x,u),  x_{k+1}^{i,j_i+} (x,u)) \big\}
\]
and  $i_{-} \in  \mathbb J^{-}_{k} (x)$ then
\[
 A_{i_{-},k}  =  \big\{ Z_{k+1}^1  \in   (x_{k+1}^{i,j_i+} (x,u),  x_{k+1}^{i,j_i-} (x,u)) \big\}.
\]
It follows that  (remark that the sets $\mathbb J^{-}_{k} (x)$ or  $\mathbb J^{+}_{k} (x)$ may be empty)
\begin{eqnarray*}
\mathbb P\Bigg(  \big( \bigcap_{i_{-} \in  \mathbb J^{-}_{k} (x) } A_{i_{-},k} \big) \cap  \big( \bigcap_{i_{+} \in  \mathbb J^{+}_{k} (x) } A_{i_{+},k} \big)  \Bigg)& =&  \mathbb P\Big( Z_{k+1}^1  \in \big( \sup_{i \in  \mathbb J^{+}_{k} (x)}  x_{k+1}^{i,j_i-} (x,u), \inf_{i \in  \mathbb J^{+}_{k} (x)}  x_{k+1}^{i,j_i+} (x,u)  \big) \\
& & \qquad \cap \big( \sup_{i \in  \mathbb J^{-}_{k} (x)}  x_{k+1}^{i,j_i+} (x,u), \inf_{i \in  \mathbb J^{-}_{k} (x)}  x_{k+1}^{i,j_i-} (x,u)  \big) \Big). 
\end{eqnarray*}
This completes the proof since $Z_{k+1}^{(2:d)}  \sim {\cal N}(0; I_{q-1})$.
\end{proof}

Now, we focus on  in the particular case where the matrix $\sigma(t,x)$, for $(t,x) \in [0,T]{\small \times } \mathbb R^d$, is   diagonal  with positive diagonal entries $\sigma^{\ell \ell}(t,x)$, $\ell=1, \ldots, d$. The following result says how to compute the transition probability weights of the $\hat X_k$'s. Let us set for every $x \in \mathbb R^d$, every $\ell \in \{1, \ldots,d\}$ and $j_{\ell} \in \{1, \ldots,N_{k+1}^{\ell} \}$, 
\begin{equation*}
  x_{k+1}^{\ell,j_{\ell}-}(x,0): = \frac{x_{k+1}^{\ell,j_{\ell}-1/2} - m_{k}^{\ell}(x)  }{\sqrt{\Delta} \sigma_k^{\ell \ell}(x)  } \quad \textrm{ and } \quad  x_{k+1}^{\ell,j_{\ell}+}(x,0): = \frac{x_{k+1}^{\ell,j_{\ell}+1/2} - m_{k}^{\ell}(x) }{\sqrt{\Delta} \sigma_k^{\ell \ell}(x)}.
 \end{equation*}

\begin{prop}  \label{PropCasIndep}
Let  $\{\hat X_{k}, k=0, \ldots,n  \}$ be the sequence of quantizers defined by the algorithm  \eqref{EqAlgorithm}  and  associated with the solution $(X_t)$ of \eqref{EqSignalProcess}. Suppose that  the volatility matrix $\sigma(t,x)$  of $(X_t)_{t \ge 0}$  is  diagonal   with positive diagonal entries $\sigma^{\ell \ell}(t,x)$, $\ell=1, \ldots,d$.  Then,
%
the transition probability  weights  $\mathbb P(\hat X_{k+1} = x_{k+1}^{j} \vert \hat X_{k}  =x_k^{i}), \ i \in I_k, j \in I_{k+1}$, are given by
  \setlength\arraycolsep{1pt}
\begin{eqnarray}
 \mathbb P\big(\hat X_{k+1} = x_{k+1}^{j} \vert \hat X_{k}  = x_k^{i} \big) &=& \prod_{\ell=1}^d   \mathbb P\big(\hat X_{k+1}^{\ell} = x_{k+1}^{j_{\ell}} \vert \hat X_{k}  = x_k^{i} \big) \\
  & = & \prod_{\ell=1}^d  \big[\Phi_0 \big( x_{k+1}^{\ell,j_{\ell}+}(x_k^{i},0) \big)  -   \Phi_0 \big( x_{k+1}^{\ell,j_{\ell}-}(x_k^{i},0) \big) \big] \label{EqEstProbaProp2PCond},
\end{eqnarray}
%
and the companion probability weights $\mathbb P \big(\hat X_{k+1}  = x_{k+1}^{j}  \big)$ are given  for every $k=0, \ldots,n-1$  and  every $j \in I_{k+1}$ by
\setlength\arraycolsep{1pt}
\begin{eqnarray}
\mathbb P \big(\hat X_{k+1}  = x_{k+1}^{j}  \big)  & =&  \sum_{i \in I_k} \prod_{\ell=1}^{d}\big[\Phi_0 \big( x_{k+1}^{\ell,j_{\ell}+}(x_k^{i},0) \big)  -   \Phi_0 \big( x_{k+1}^{\ell,j_{\ell}-}(x_k^{i},0) \big) \big]  \mathbb P( \hat X_{k}  = x_k^{i} ). \label{EqEstProbaProp2P}
\end{eqnarray}
\end{prop}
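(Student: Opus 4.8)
The plan is to exploit the fact that a diagonal volatility matrix (which forces $q=d$) completely decouples the $d$ Euler components. First I would specialize the operator ${\cal E}_k^{\ell}$ appearing in \eqref{EqAlgorithm}: since $\sigma^{\ell m}(t_k,x)=0$ for $m\neq\ell$, we have $(\sigma^{\ell\bullet}(t_k,x)\,\vert\,z)=\sigma^{\ell\ell}(t_k,x)\,z^{\ell}$, so that
$$\tilde X_{k+1}^{\ell}={\cal E}_k^{\ell}(\hat X_k,Z_{k+1})=m_k^{\ell}(\hat X_k)+\sqrt{\Delta}\,\sigma_k^{\ell\ell}(\hat X_k)\,Z_{k+1}^{\ell}.$$
In particular, conditionally on $\hat X_k=x_k^{i}$, the variable $\tilde X_{k+1}^{\ell}$ is an affine function of the single coordinate $Z_{k+1}^{\ell}$.

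Next I would invoke the independence of the coordinates of $Z_{k+1}\sim{\cal N}(0;I_d)$. As recorded in Remark~\ref{PropMarkovChainProper}, the event $\{\hat X_{k+1}=x_{k+1}^{j}\}$ coincides with $\{\mathcal{E}_k(\hat X_k,Z_{k+1})\in\prod_{\ell=1}^d C_{j_{\ell}}(\Gamma_{k+1}^{\ell})\}$, since for the Euclidean norm the Voronoi cell of the product quantizer $\Gamma_{k+1}=\bigotimes_{\ell}\Gamma_{k+1}^{\ell}$ is the product of the one-dimensional cells $C_{j_{\ell}}(\Gamma_{k+1}^{\ell})=(x_{k+1}^{\ell,j_{\ell}-1/2},x_{k+1}^{\ell,j_{\ell}+1/2})$. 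Conditionally on $\hat X_k=x_k^{i}$, the $d$ events $\{\tilde X_{k+1}^{\ell}\in C_{j_{\ell}}(\Gamma_{k+1}^{\ell})\}$ are each measurable with respect to the pairwise disjoint $\sigma$-fields $\sigma(Z_{k+1}^{\ell})$, hence mutually independent, which yields the factorization asserted in the first equality of \eqref{EqEstProbaProp2PCond}.

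It then remains to evaluate a single factor. Standardizing the Gaussian variable in $\{x_{k+1}^{\ell,j_{\ell}-1/2}<m_k^{\ell}(x_k^{i})+\sqrt{\Delta}\,\sigma_k^{\ell\ell}(x_k^{i})\,Z_{k+1}^{\ell}<x_{k+1}^{\ell,j_{\ell}+1/2}\}$ rewrites it as $\{x_{k+1}^{\ell,j_{\ell}-}(x_k^{i},0)<Z_{k+1}^{\ell}<x_{k+1}^{\ell,j_{\ell}+}(x_k^{i},0)\}$ with the endpoints defined just before the statement; since $Z_{k+1}^{\ell}\sim{\cal N}(0,1)$, its probability equals $\Phi_0(x_{k+1}^{\ell,j_{\ell}+}(x_k^{i},0))-\Phi_0(x_{k+1}^{\ell,j_{\ell}-}(x_k^{i},0))$, which is exactly the explicit expression in \eqref{EqEstProbaProp2PCond}. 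Formula \eqref{EqEstProbaProp2P} for the companion weights then follows at once by inserting this conditional law into the total-probability identity \eqref{EqProbaVector}.

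I do not anticipate a genuine difficulty: the diagonal structure renders the coordinates independent, so the $d$-dimensional integral collapses into a product of scalar Gaussian tail differences. The only points requiring care are that a diagonal $\sigma$ forces $q=d$, and that the Voronoi boundaries $x_{k+1}^{\ell,j_{\ell}\pm 1/2}$ carry no mass under the non-degenerate (because $\sigma_k^{\ell\ell}>0$) Gaussian law of $\tilde X_{k+1}^{\ell}$, so the open/closed status of the cells is immaterial. Alternatively, the statement can be obtained by specializing Proposition~\ref{PropTransitionProba}, where in the diagonal case $\mathbb J^{+}_k(x_k^{i})=\{1\}$, $\mathbb J^{-}_k(x_k^{i})=\emptyset$ and $\mathbb J^{0}_k(x_k^{i})=\{2,\ldots,d\}$; integrating the indicator-weighted $\Phi_0$-difference of \eqref{EqProbaCondVector} over $\zeta$ reproduces the product form, but the direct independence argument is shorter and more transparent.
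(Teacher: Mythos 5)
Your proof is correct and takes essentially the same route as the paper's: both write the conditional probability as $\mathbb P\big(\bigcap_{\ell=1}^d\{{\cal E}_k^{\ell}(x_k^{i},Z_{k+1})\in(v^{\ell,j_{\ell}-},v^{\ell,j_{\ell}+})\}\big)$, use the diagonal structure of $\sigma$ to see that each ${\cal E}_k^{\ell}(x_k^{i},Z_{k+1})$ depends only on the single coordinate $Z_{k+1}^{\ell}$ (hence the events are independent and the probability factorizes), and then standardize each one-dimensional Gaussian to get the $\Phi_0$-differences, with the companion weights following from \eqref{EqProbaVector}. Your extra care about negligible Voronoi boundaries and the alternative derivation from Proposition~\ref{PropTransitionProba} are sound but not part of the paper's (shorter) argument.
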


\begin{proof} 
 1.  Set $v^{\ell,j_{\ell}+}:=x_{k+1}^{\ell,j_{\ell}+1/2}$ and $v^{\ell,j_{\ell}-} = x_{k+1}^{\ell,j_{\ell}-1/2}$,  for $j \in I_{k+1}$ and $\ell=1, \ldots, d$.  We have
\begin{eqnarray*}
 \mathbb P(\hat X_{k+1}  =  x_{k+1}^{j} \vert \hat X_{k}  =x_k^{i}) & =&    \mathbb P \Big(\bigcap_{\ell=1}^d \big\{\tilde X_{k+1}^{\ell} \in \big(v^{\ell,j_{\ell}-}, v^{\ell,j_{\ell}+} \big) \big\}   \big\vert \hat X_{k}  =x_k^{i} \Big) \\
 & = & \mathbb P \Big(\bigcap_{\ell=1}^d \big\{{\cal E}_k^{\ell}(x_k^{i},Z_{k+1}) \in \big(v^{\ell,j_{\ell}-}, v^{\ell,j_{\ell}+} \big)  \big\}  \Big).
 \end{eqnarray*}
 Since for every $k=0, \ldots,n-1$, $\sigma(t_k,x)$ is a diagonal matrix, it follows that the operators ${\cal E}_k^{\ell}(x_k^{i},Z_{k+1}) = {\cal E}_k^{\ell}(x_k^{i},Z_{k+1}^\ell)$, for $\ell=1, \ldots,d$, are independent, so that 
  \begin{eqnarray*}
 \mathbb P(\hat X_{k+1}  =  x_{k+1}^{j} \vert \hat X_{k}  =x_k^{i})  & = &   \prod_{\ell=1}^d \mathbb P \Big({\cal E}_k^{\ell}(x_k^{i},Z_{k+1}^{\ell}) \in \big(v^{\ell,j_{\ell}-}, v^{\ell,j_{\ell}+} \big)   \Big) \\
& =& \prod_{\ell=1}^d \big[\Phi_0 \big( x_{k+1}^{\ell,j_{\ell}+}(x_k^{i}) \big)  -   \Phi_0 \big( x_{k+1}^{\ell,j_{\ell}-}(x_k^{i}) \big) \big].
  \end{eqnarray*}
  The second assertion   immediately follows. 
  \end{proof}

The following result is useful in the situation where we need to approximate  the expectation of a function of one component of the vector $\bar X_k$ as for example in the pricing of European options  in the Heston model.

\begin{prop} \label{PropTransComponent}
Let $\Gamma_{k+1}^{\ell}$ be an optimal quantizer for the random variable $\tilde X_{k+1}^{\ell}$. Suppose that  the  optimal product  quantizer $\Gamma_k$ for $\tilde X_{k}$ and its companion  weights $\mathbb P(\hat X_{k} =x_k^{i} )$, $i \in I_k$, are computed.  

 For any $\ell \in \{1, \ldots,d\}$ and any  $j_{\ell} \in \{1, \ldots,N_{k+1}^{\ell} \}$,  the transition probability  weights $\mathbb P(\tilde X_{k+1}^{\ell} \in C_{j_{\ell}}(\Gamma_{k+1}^{\ell})\vert \hat X_{k}  =x_k^{i})$ are given by
  \setlength\arraycolsep{1pt}
\begin{eqnarray} 
 \mathbb P\big(\tilde X_{k+1}^{\ell} \in C_{j_{\ell}}(\Gamma_{k+1}^{\ell})\vert \hat X_{k}  = x_k^{i} \big) 
 & =&  \Phi_0 \big( x_{k+1}^{\ell,j_{\ell}+} (x_k^i) \big) -  \Phi_0 \big( x_{k+1}^{\ell,j_{\ell}-} (x_k^i) \big) \label{EqProbaCondCompos_i}.
\end{eqnarray}
\end{prop}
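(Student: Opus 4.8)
The plan is to exploit the fact that, in contrast with the transition probability of the full vector treated in Proposition \ref{PropTransitionProba}, the event $\{\tilde X_{k+1}^{\ell} \in C_{j_{\ell}}(\Gamma_{k+1}^{\ell})\}$ concerns a single component and therefore reduces to a one-dimensional Gaussian computation, requiring neither the index sets $\mathbb J^0_k, \mathbb J^+_k, \mathbb J^-_k$ nor the domain decomposition used there. First I would condition on $\{\hat X_k = x_k^i\}$ and use the recursion \eqref{EqAlgorithm} to write
\[
\tilde X_{k+1}^{\ell} = {\cal E}_k^{\ell}(x_k^i, Z_{k+1}) = m_k^{\ell}(x_k^i) + \sqrt{\Delta}\,\big(\sigma_k^{\ell {\scriptscriptstyle \bullet}}(x_k^i) \,\big\vert\, Z_{k+1}\big),
\]
where $Z_{k+1} \sim {\cal N}(0; I_q)$ is independent of $\hat X_k$.

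The key step is to observe that the scalar $(\sigma_k^{\ell {\scriptscriptstyle \bullet}}(x_k^i) \vert Z_{k+1}) = \sum_{m=1}^q \sigma_k^{\ell m}(x_k^i) Z_{k+1}^m$ is an affine functional of a standard Gaussian vector, hence itself a centered real Gaussian whose variance is $\sum_{m=1}^q (\sigma_k^{\ell m}(x_k^i))^2 = \vert \sigma_k^{\ell {\scriptscriptstyle \bullet}}(x_k^i) \vert_{_2}^2$. Consequently, conditionally on $\{\hat X_k = x_k^i\}$,
\[
\tilde X_{k+1}^{\ell} \sim {\cal N}\big(m_k^{\ell}(x_k^i), \Delta\, \vert \sigma_k^{\ell {\scriptscriptstyle \bullet}}(x_k^i) \vert_{_2}^2\big).
\]

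Next I would identify the Voronoi cell: since $\Gamma_{k+1}^{\ell}$ is a one-dimensional (ordered) grid, the cell $C_{j_{\ell}}(\Gamma_{k+1}^{\ell})$ is the interval $\big(x_{k+1}^{\ell, j_{\ell}-1/2}, x_{k+1}^{\ell, j_{\ell}+1/2}\big)$, with the midpoint and boundary conventions recalled at the start of Section \ref{SubsectionWeights} (in particular $x_{k+1}^{\ell,1/2} = -\infty$ and $x_{k+1}^{\ell, N_{k+1}^{\ell}+1/2} = +\infty$ for the two extreme cells). It then suffices to write
\[
\mathbb P\big(\tilde X_{k+1}^{\ell} \in C_{j_{\ell}}(\Gamma_{k+1}^{\ell}) \,\big\vert\, \hat X_k = x_k^i\big) = \mathbb P\big(x_{k+1}^{\ell, j_{\ell}-1/2} < \tilde X_{k+1}^{\ell} < x_{k+1}^{\ell, j_{\ell}+1/2}\big),
\]
and to standardize by subtracting the mean $m_k^{\ell}(x_k^i)$ and dividing by the standard deviation $\sqrt{\Delta}\,\vert \sigma_k^{\ell {\scriptscriptstyle \bullet}}(x_k^i) \vert_{_2}$. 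The two standardized endpoints are exactly $x_{k+1}^{\ell, j_{\ell}+}(x_k^i)$ and $x_{k+1}^{\ell, j_{\ell}-}(x_k^i)$, so that applying $\Phi_0$ delivers formula \eqref{EqProbaCondCompos_i}.

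There is no serious obstacle here; the only point deserving care is the conditional variance computation, which must return the full Euclidean norm $\vert \sigma_k^{\ell {\scriptscriptstyle \bullet}}(x_k^i) \vert_{_2}$ of the $\ell$-th row of $\sigma$ rather than a single entry, since all $q$ coordinates of the Brownian increment contribute to the single component $\tilde X_{k+1}^{\ell}$. This is precisely what distinguishes the present marginal statement from the diagonal case of Proposition \ref{PropCasIndep}, in which each component is driven by one Brownian coordinate only and the norm collapses to the scalar $\sigma_k^{\ell \ell}$.
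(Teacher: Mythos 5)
Your proof is correct and follows essentially the same route as the paper's own argument: condition on $\{\hat X_k = x_k^i\}$, use the independence of $Z_{k+1}$ to identify the conditional law of $\tilde X_{k+1}^{\ell} = {\cal E}_k^{\ell}(x_k^i, Z_{k+1})$ as ${\cal N}\big(m_k^{\ell}(x_k^i);\, \Delta\, \vert \sigma_k^{\ell {\scriptscriptstyle \bullet}}(x_k^i) \vert_{_2}^2\big)$, write the Voronoi cell as the interval between the half-point abscissas, and standardize. Your remark that the variance must involve the full Euclidean norm of the $\ell$-th row of $\sigma$ (all $q$ Brownian coordinates contributing) is exactly the observation with which the paper closes its proof.
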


\begin{proof} 
1. For every $k \in \{ 1,\ldots,n-1 \}$, for every $\ell=1, \ldots,d$ and  for every  $j_{\ell}=1,\ldots,N_{k+1}^{\ell}$, we have
\setlength\arraycolsep{1pt}
 \begin{eqnarray*} \label{EqSumNk-1Dist}
 \mathbb P \big(\tilde X_{k+1}^{\ell} \in C_{j_{\ell}}(\Gamma_{k+1}^{\ell})\vert \hat X_{k} = x_{k}^{i} \big) &=&    \mathbb P \big(\tilde X_{k+1}^{\ell} \leq  x_{k+1}^{\ell, j_{\ell}+1/2}\vert  \hat  X_{k} =  x_k^{i} \big)   -  \mathbb P \big(\tilde X_{k+1} ^{\ell} \leq  x_{k+1}^{\ell,j_{\ell}-1/2} \vert   \hat  X_{k} =  x_k^{i} \big)\\
 & = &  \mathbb P \big(  {\cal E}_k^{\ell}(x_k^{i},Z_{k+1}) \leq  x_{k+1}^{\ell, j_{\ell}+1/2} \big)   -  \mathbb P \big( {\cal E}_k^{\ell}(x_k^{i},Z_{k+1}) \leq  x_{k+1}^{\ell,j_{\ell}-1/2} \big).
 \end{eqnarray*}
To complete the proof we just have to remark that 
$ {\cal E}_k^{\ell}(x_k^{i},Z_{k+1})  \sim  {\cal N}\big(m_k^{\ell}(x_k^i); \Delta  \vert  \sigma_k^{\ell {\scriptscriptstyle \bullet}}(x_k^{i}) \vert_{_2}^2 \big).$
\end{proof}

Notice that the companion probability $\mathbb P(\tilde X_{k+1}^{\ell} \in C_{j_{\ell}}(\Gamma_{k+1}))$  is given, for every  $\ell \in \{1,\ldots,d\}$ and for every  $j_{\ell} \in \{1,\cdots,N_{k+1}^{\ell}\}$, by 
\setlength\arraycolsep{0.3pt}
\begin{eqnarray} 
\mathbb P \big(\tilde X_{k+1}^{\ell} \in C_{j_{\ell}}(\Gamma_{k+1}^{\ell}) \big)  & =&  \sum_{i \in I_k}  \Bigg[  \Phi_0 \Bigg(  \frac{x_{k+1}^{\ell,j_{\ell}+1/2} - m_k^{\ell}(x_k^{i})}{ \sqrt{\Delta}  \vert  \sigma_k^{\ell {\scriptscriptstyle \bullet}}(x_k^{i}) \vert_{_2}}  \Bigg) \nonumber \\
&  & \qquad \quad  -  \   \Phi_0 \Bigg(  \frac{x_{k+1}^{\ell,j_{\ell}-1/2} - m_k^{\ell}(x_k^{i})}{ \sqrt{\Delta}  \vert  \sigma_k^{\ell {\scriptscriptstyle \bullet}}(x_k^{i}) \vert_{_2}}  \Bigg)  \Bigg]
\mathbb P \big(\hat X_{k} = x_k^{i} ).  \label{EqProbaCompos_i}
\end{eqnarray}

We may note that the  $\ell$-th  component process $(\hat X^{\ell}_k)_{k \ge 0}$ is not a Markov chain.  We may however compute the transition probabilities 
\[
\mathbb P(\hat X_{k+1}^{\ell}  = x_{k+1}^{\ell,j_{\ell}} \vert \hat X_k^{\ell'} = x_{k}^{\ell',j_{\ell'}}),  \quad \ell,\ell' \in \{1, \ldots,d \},  \ j_{\ell} \in \{1, \ldots,N_{k+1}^{\ell}\}, \  j_{\ell'} \in \{1, \ldots,N_k^{\ell'}\}.
 \]
This is the aim of the following remark which follows from {\it Bayes} formula. 

\begin{rem}
For $\ell,\ell' \in \{1, \ldots,d \}$,  $ j_{\ell} \in \{1, \ldots,N_{k+1}^{\ell}\}$ and $ j_{\ell'} \in \{1, \ldots,N_k^{\ell'}\}$, we have
\begin{equation}
\mathbb P \big(\hat X_{k+1}^{\ell}  = x_{k+1}^{\ell,j_{\ell}} \vert \hat X_k^{\ell'} = x_{k}^{\ell',j_{\ell'}} \big) = \sum_{i \in I_k} \mathds{1}_{\{i_{\ell'} = j_{\ell'} \}}  \frac{\mathbb P(\hat X_{k+1}^{\ell} = x_{k+1}^{\ell,j_{\ell}} \vert \hat X_k =x_k^{i})}{\mathbb P(\hat X_k^{\ell'} = x_{k}^{\ell',j_{\ell'}})}  \mathbb P(\hat X_k = x_k^{i})
\end{equation}
where the terms $\mathbb P(\hat X_k = x_k^{i})$,  $\mathbb P(\hat X_{k+1}^{\ell} = x_{k+1}^{\ell,j_{\ell}} \vert \hat X_k =x_k^{i})$ and $\mathbb P(\hat X_k^{\ell'} = x_{k}^{\ell',j_{\ell'}})$ are computed from \eqref{EqProbaVector}, \eqref{EqProbaCondCompos_i} and \eqref{EqProbaCompos_i}, respectively.  

As a matter of fact, applying {\it Bayes} formula and summing over $i \in I_k$ yields:
\begin{eqnarray*}
\mathbb P \big(\hat X_{k+1}^{\ell} = x_{k+1}^{\ell,j_{\ell}} \vert \hat X_k^{\ell'} = x_{k}^{\ell',j_{\ell'}} \big) & = &\sum_{i \in I_k}   \frac{\mathbb P(\hat X_{k+1}^{\ell} = x_{k+1}^{\ell,j_{\ell}},  \hat X_k^{\ell'} = x_{k}^{\ell',j_{\ell'}},  \hat X_k =x_k^{i} )}{\mathbb P(\hat X_k^{\ell'} = x_{k}^{\ell',j_{\ell'}})}   \\
& = & \sum_{i \in I_k}  \mathds{1}_{\{i_{\ell'} = j_{\ell'} \}}    \frac{\mathbb P(\hat X_{k+1}^{\ell} = x_{k+1}^{\ell,j_{\ell}},  \hat X_k =x_k^{i} )}{\mathbb P(\hat X_k^{\ell'} = x_{k}^{\ell',j_{\ell'}})} \\
& =& \sum_{i \in I_k} \mathds{1}_{\{i_{\ell'} = j_{\ell'} \}}  \frac{\mathbb P(\hat X_{k+1}^{\ell} = x_{k+1}^{\ell,j_{\ell}} \vert \hat X_k =x_k^{i})}{\mathbb P(\hat X_k^{\ell'} = x_{k}^{\ell',j_{\ell'}})}  \mathbb P(\hat X_k = x_k^{i}).
\end{eqnarray*}
\end{rem}

\medskip 

In the foregoing, we assume that we have access to the  $N_k^{\ell}$-quantizers $\Gamma_k^{\ell}$   of   the $\ell$-th component  $\bar X_k^{\ell}$ of the vector $\bar X_k$,  for every $\ell=1, \ldots,d$. We show how to compute  the distortion functions associated with every component of the vector $\tilde X_{k+1}$, $k=0, \ldots,n-1$. From the numerical point of view,  this will allow us to use the Newton-Raphson algorithm to compute the optimal quantizers associated to each component $\tilde X_{k+1}^{\ell}$, $\ell=1, \ldots,d$,  of  the vector $\tilde X_{k+1}$, for $k=0, \ldots,n-1$. Then,  the quantization $\hat X_{k+1}$ of $\tilde X_{k+1}$  is defined as the product quantization $\hat X_k = (\hat X_k^1, \ldots, \hat X_k^d)$, where  $\hat X_k^{\ell} = \textrm{Proj}_{\Gamma_{k+1}^{\ell}} (\tilde X_{k+1}^{\ell})$.

\subsection{Computing the distortion, the gradient and the Hessian matrix associated to a componentwise quantizer} 
Our aim, for numerical computation of the  componentwise optimal quantizations, is to use   the Newton-Raphson's algorithm  in $\mathbb R^{N_k}$ which involves  the gradient and the Hessian matrix of the distortion  functions $\tilde D_k^{\ell}$, $k=0, \ldots,n$;  $\ell=1,\ldots,d$.   In  the following, we give  useful expressions for the distortion functions   $\tilde D_k^{\ell}$, their gradient vectors $\nabla \tilde D_k^{\ell}$ and  their Hessian matrices $\nabla^2 \tilde D_k^{\ell}$. We state these results in the next  proposition.

Above all, recall that for every $\ell=1, \ldots,d$, for every $k=0, \ldots, n-1$,
\[
  \tilde D_{k+1}^{\ell} (\Gamma_{k+1}^{\ell})  =   \sum_{i  \in I_k}  \mathbb E \big[   d({\cal E}_k^{\ell}(x_k^{i},Z_{k+1}), \Gamma_{k+1}^{\ell})^2 \big] \mathbb P\big(\hat X_k =x_k^{i} \big)
\]
 and notice  that using  Proposition \ref{PropDifferentiability},   the distortion  function  $\tilde D_{k+1}^{\ell}(\Gamma_{k+1}^{\ell}) $   is continuously differentiable as a function of the $N_{k+1}$-quantizer $\Gamma_{k+1}^{\ell} =\{x_{k+1}^{\ell,j_{\ell}}, \ j_{\ell} =1, \ldots N_{k+1}^{\ell}  \}$ (having pairwise distinct components so that it can be viewed as an $N_{k+1}^{\ell}$-tuple) and its gradient  vector reads  
\[
  \nabla  \tilde D_{k+1}^{\ell}( \Gamma_{k+1}^{\ell}) 
   =    2 \Bigg[     \sum_{i \in I_k}  \mathbb E \Big(   \mathds{1}_{\{ {\cal E}_k^{\ell}(x_{k}^{i},Z_{k+1})   \in C_{j_{\ell}}(\Gamma_{k+1}^{\ell})  \}}  \big( x_{k+1}^{\ell,j_{\ell}}   - {\cal E}_k^{\ell}(x_{k}^{i},Z_{k+1})  \big)  \Big) \mathbb P(\hat X_k =x_{k}^{\ell})   \Bigg]_{j_{\ell}=1,\cdots,N_{k+1}^{\ell}}.
\]

We recall that  key point of our method is to deal with the product quantization of the components of the process $(\bar X_k)_{0 \le k \le n}$. From a numerical point of view,  each component will be quantized using the Newton-Raphson algorithm.   To this end, we have to compute  (explicitly) the distortion function $\tilde D_{k+1}^{\ell} (\cdot)$, the components of its gradient vector and the components its Hessian matrix. This is the purpose of the following remark.  Its  proof  relies on tedious though elementary  computation. Therefore,   we have deliberately omitted the proof.  

\begin{rem}  Recall that for every $\ell \in \{ 1, \ldots,d \}$,   $\vartheta^{\ell}_{k}(x)^{2} =   \sum_{p =1}^q \Delta \big(\sigma_k^{\ell p}(x) \big)^2$.

\noindent $a)$ {\em Distortion}.  We have for every $\ell=1, \ldots,d$ and  every $k=0, \ldots, n-1$,
\begin{equation}
\tilde D_{k+1}^{\ell} (\Gamma_{k+1}^{\ell})  =  \sum_{j_{\ell} =1}^{N_{k+1}^{\ell}}   \sum_{i \in I_k}  \Psi_{\ell,j_{\ell}} (x_k^{i}) \,  p_k^i  = \sum_{j_{\ell} =1}^{N_{k+1}^{\ell}}   \mathbb E \Psi_{\ell,j_{\ell}} (\hat X_k),
\end{equation}
where  for every  $x \in \mathbb R^d$,
\setlength\arraycolsep{0.3pt}
\begin{eqnarray*}
\Psi_{\ell,j_{\ell}} (x) &=& \Big(  \big(m_k^{\ell} (x)   - x_{k+1}^{\ell,j_{\ell}}  \big)^2 +  \vartheta^{\ell}_{k}(x)^2  \Big)  \Big(  \Phi_0 \big(x_{k+1}^{\ell,j_{\ell}+}(x) \big)  -   \Phi_0 \big(x_{k+1}^{\ell,j_{\ell}-}(x) \big) \Big) \\
& & \ +  \ 2\vartheta^{\ell}_{k}(x)  \Big( x_{k+1}^{\ell,j_{\ell}}  - m_k^{\ell}(x)   \Big) \Big( \Phi_0' \big( x_{k+1}^{\ell,j_{\ell}+}(x) \big) - \Phi_0' \big(x_{k+1}^{\ell,j_{\ell}-}(x) \big)\Big) \\
& & \ - \  \vartheta^{\ell}_{k}(x) ^2 \Big ( x_{k+1}^{\ell,j_{\ell}+}(x)  \Phi_0' \big( x_{k+1}^{\ell,j_{\ell}+}(x) \big) - x_{k+1}^{\ell,j_{\ell}-}(x)  \Phi_0' \big(x_{k+1}^{\ell,j_{\ell}-}(x) \big) \Big).
\end{eqnarray*}

\noindent $b)$ {\em Gradient}. The components  of  the gradient  $\nabla\tilde D_{k+1}^{\ell}(\Gamma_{k+1}^{\ell})$ are given for every $j_{\ell}=1, \ldots,N_{k+1}^{\ell}$   by
\begin{equation}
  \frac{\partial \tilde D_{k+1}^{\ell}(\Gamma_{k+1}^{\ell}) }{\partial  x_{k+1}^{\ell,j_{\ell}}}  =  \sum_{i \in I_k}   \Psi_{j_{\ell}}' (x_k^{i}) \, p_k^i  =  \mathbb E \Psi_{j_{\ell}}' (\hat X_k)
   \end{equation}
  where for every  $x \in \mathbb R^d$,
  \setlength\arraycolsep{1pt}
\begin{eqnarray*}   
  \Psi_{j_{\ell}}'(x) & = &   \big( x_{k+1}^{\ell,j_{\ell}} - m_k^{\ell}(x) \big)  \Big( \Phi_0 \big(x_{k+1}^{\ell,j_{\ell}+}(x) \big)  -   \Phi_0 \big(x_{k+1}^{\ell,j_{\ell}-}(x) \big)   \Big) \\
  &  &   + \  \vartheta^{\ell}_{k} ( x) \Big(\Phi_0' \big( x_{k+1}^{\ell,j_{\ell}+}(x) \big) - \Phi_0' \big(x_{k+1}^{\ell,j_{\ell}-}(x) \big) \Big).
  \end{eqnarray*}
$c)$ {\em Hessian}. The   sub-diagonal, the super-diagonals and   the  diagonal terms  of the Hessian matrix are given  respectively by  
  \begin{eqnarray*}   
  & &  \frac{\partial^2 \tilde D_{k+1}^{\ell}(\Gamma_{k+1}^{\ell}) }{\partial  x_{k+1}^{\ell,j_{\ell}} \partial  x_{k+1}^{\ell,j_{\ell}-1} } =   \sum_{i \in I_k}    \Psi_{j_{\ell},j_{\ell}-1}'' (x_k^{i}) \, p_k^i  = \mathbb E  \Psi_{j_{\ell},j_{\ell}-1}'' (\hat X_k), \\
  & &  \frac{\partial^2 \tilde D_{k+1}^{\ell}(\Gamma_{k+1}^{\ell}) }{\partial  x_{k+1}^{\ell,j_{\ell}} \partial  x_{k+1}^{\ell,j_{\ell}+1} } =  \sum_{i \in I_k}  \Psi_{j_{\ell},j_{\ell}+1}'' (x_k^{i}) \, p_k^i  = \mathbb E  \Psi_{j_{\ell},j_{\ell}+1}'' (\hat X_k)\\
   & \textrm{ and } \quad &   \frac{\partial^2 \tilde D_{k+1}^{\ell}(\Gamma_{k+1}^{\ell}) }{\partial^2  x_{k+1}^{\ell,j_{\ell}}} =  \sum_{i \in I_k}   \Psi_{j_{\ell},j_{\ell}}'' (x_k^{i}) \, p_k^i  = \mathbb E  \Psi_{j_{\ell},j_{\ell}}'' (\hat X_k)
   \end{eqnarray*}
 where,  for every  $x \in \mathbb R^d$, 
 \begin{eqnarray*}   
 & &  \Psi_{j_{\ell},j_{\ell}-1}'' (x)   = - \frac{1}{4}  \frac{1}{\vartheta^{\ell}_{k}( x)} ( x_{k+1}^{\ell,j_{\ell}} - x_{k+1}^{\ell,j_{\ell}-1}) \Phi_0' \big(x_{k+1}^{\ell,j_{\ell}-}(x) \big), \\
 & & \Psi_{j_{\ell},j_{\ell}+1}'' (x)   = - \frac{1}{4}  \frac{1}{\vartheta^{\ell}_{k}( x)} ( x_{k+1}^{\ell,j_{\ell}+1} - x_{k+1}^{\ell,j_{\ell}}) \Phi_0' \big(x_{k+1}^{\ell,j_{\ell}+}(x) \big),  \\
 & \qquad & \Psi_{j_{\ell},j_{\ell}}'' (x)  = \Phi_0 \big(x_{k+1}^{\ell,j_{\ell}+}(x) \big)  -   \Phi_0 \big(x_{k+1}^{\ell,j_{\ell}-}(x) \big) +  \Psi_{j_{\ell},j_{\ell}-1}'' (x) +  \Psi_{j_{\ell},j_{\ell}+1}'' (x).
   \end{eqnarray*}

\end{rem}

Once we have access to the gradient vector and the Hessian matrix associated with $\tilde X_{k+1}^{\ell}$ and to the optimal grids and companions weights associated with the $\hat X_{k'}$'s, $k'=0, \ldots,k$,   it is possible to write down  (at least formally) a Newton-Raphson zero search procedure to compute the optimal quantizer $\Gamma_{k+1}^{\ell}$. The Newton-Raphson algorithm is in fact   indexed by $ p \ge 0$, where  a current grid $\Gamma_{k+1}^{\ell,p}$ is updated as follows:
\begin{equation}
\Gamma_{k+1}^{\ell,p+1}   =    \Gamma_{k+1}^{\ell,p} - \big(\nabla^2  \tilde D_{k+1}^{\ell} (\Gamma_{k+1}^{\ell,p}) \big)^{-1} \nabla \tilde D_{k+1}^{\ell}( \Gamma_{k+1}^{\ell,p}), \quad  p\ge 1,
\end{equation}
starting  from a $\Gamma^{\ell,0}_{k+1}\!\in \mathbb R^{N_{k+1}^{\ell}}$ (with increasing components).

\medskip 

\begin{rem}  ({\it  Stationarity property})  If  $\Gamma_{k+1}^{\ell}$ is an optimal Markovian product   quantizer for $\tilde X_{k+1}^{\ell}$  and  if $\hat X_{k+1}^{\ell}$ denotes   the  quantization of $\tilde X_{k+1}^{\ell}$ by  the grid $\Gamma_{k+1}^{\ell}$, then   $\Gamma_{k+1}^{\ell}$ is a stationary quantizer for $\tilde X_{k+1}^{\ell}$, means,    $ \mathbb E\Big(\tilde X_{k+1}^{\ell} \big \vert  \hat X_{k+1}^{\ell} \Big)  = \hat X_{k+1}^{\ell}$. Equivalently, this means that if  $\Gamma_{k+1}^{\ell}= \big\{x_{k+1}^{\ell,j_{\ell}}, \, j_{\ell}=1,\ldots,N_{k+1}^{\ell} \big\}$ with 
\begin{eqnarray}  
 x_{k+1}^{\ell,j_{\ell}}   & = & \frac{ \sum_{i \in I_k}  \mathbb E \big( {\cal E}_k^{\ell}(x_{k}^{i},Z_{k+1})  \mathds{1}_{\{ {\cal E}_k^{\ell}(x_{k}^{i},Z_{k+1}) \in C_{j_{\ell}}(\Gamma_{k+1}^{\ell}) \} }\big)   \mathbb P(\hat X_k = x_k^{i})    }{p_{k+1}^{j_{\ell}}} \label{EqLloyd1}
\\
 \textrm{and }  \quad  p_{k+1}^{j_{\ell}}&=&   \sum_{i \in I_k}  \mathbb P \big(   {\cal E}_k^{\ell}(x_{k}^{i},Z_{k+1})  \in C_{j_{\ell}}(\Gamma_{k+1}^{\ell})   \big) \mathbb P(\hat X_k = x_k^{i})   ,\; j_{\ell}=1,\ldots, N_{k+1}^{\ell}.  \label{EqLloyd2}  
 \end{eqnarray}
 
 A straightforward computation leads to the following result: if  $\Gamma_{k+1}^{\ell}= \{x_{k+1}^{\ell,j_{\ell}}, \, j_{\ell}=1,\ldots,N_{k+1}^{\ell}\}$ is a stationary quantizer for $\tilde X_{k+1}^{\ell}$ then, for every $\ell \in \{1, \ldots,d \}$ and for every  $j_{\ell} \in \{  1,\ldots,N_{k+1}^{\ell}\}$,
 \begin{equation}
  x_{k+1}^{\ell,j_{\ell}}   = \frac{ \sum_{i \in I_k} \big[m_k^{\ell}(x_k^i)  \gamma_{\ell,k}(x_k^i)  - \vartheta^{\ell}_{k}(x_k^i)  \gamma_{\ell,k}'(x_k^i)  \big] \, p_k^i }{\sum_{i \in I_k}  \gamma_{\ell,k}(x_k^i)  \, p_k^i  }
 \end{equation}
 where for every $x \in \mathbb R^d$,
   \begin{equation}
 \gamma_{\ell,k}(x) =   \Phi_0 \big(x_{k+1}^{\ell,j_{\ell}+}(x) \big)  -   \Phi_0 \big(x_{k+1}^{\ell,j_{\ell}-}(x) \big) \ \textrm{ and } \  \gamma_{\ell,k}'(x) =   \Phi_0 '\big(x_{k+1}^{\ell,j_{\ell}+}(x) \big)  -   \Phi_0' \big(x_{k+1}^{\ell,j_{\ell}-}(x) \big).
   \end{equation}
\end{rem}

 \subsection{The error analysis}
 
Our aim is now to compute the quadratic quantization error bound  $\Vert  \bar X_T - \hat X_T \Vert_{_2} : = \Vert  \bar X_n - \hat X_n^{\Gamma_n} \Vert_{_2}$.  The analysis of this error bound  is  the subject of the following  theorem.   We suppose that $x_0 = X_0 = \tilde X_0$. We consider here a regular time discretization $(t_k)_{0 \le k \le n}$ with step $\Delta = T/n$: $t_k = \frac{k T}{n}$, $k=0, \ldots,n$.
 
 \begin{thm}  \label{TheoConvergenceRate}  Assume  the coefficients $b$, $\sigma$    satisfy  the  classical Lipschitz assumptions  \eqref{LipAssb}, \eqref{LipAssSig} and  \eqref{EqLinGrowth}.  Let, for every $k=0,\ldots,n$,   $\Gamma_k$ be a Markovian product  quantizer for  $\tilde X_k$ at level  $N_k$. Then,  for  every $k=0, \cdots, n$, for   any $\eta \in ]0,1]$,
 \begin{equation}  \label{EqTheoConvergenceRate}
  \Vert  \bar X_k -  \hat X_k^{\Gamma_k} \Vert_{_2}   \le K_{2, \eta}   \sum_{k'=1}^{k}  e^{ (k-k') \Delta  C_{b,\sigma} }    a_{k'}\big(b, \sigma,d,k,\Delta,x_0 ,L,2+\eta\big)     N_{k'}^{-1/d} 
 \end{equation}
  where for every $ p \in (2,3]$,
  \[
 a_{k'}(b, \sigma, d, k, d, \Delta,x_0 ,L,\theta) :=  e^{\Delta C_{b,\sigma}  \frac{(k-k')}{p} }  \Big[  e^{(\kappa_{p}  + K_{p}) t_{k'}}   \vert  x_{0} \vert^{p}   +   \frac{ d^{k (\frac{p}{2}-1)} e^{\kappa_{p}  \Delta }L + K_{p}}{ d^{\frac{p}{2}-1} (\kappa_{p} +K_{p})}  \big(e^{(\kappa_{p} + K_{p}) t_{k'}} -1 \big) \Big]^{\frac{1}{p}},
  \]
with    $C_{b,\sigma}=[b]_{\rm Lip} + \frac{1}{2} [\sigma]_{\rm Lip}^2$,  $K_{2, \eta}:= K_{2,1,\eta}$ is a universal constant  defined in  Equation  \eqref{LemPierce}; 
 \[
 \kappa_{\theta} := \Big(\frac{(\theta+1)(\theta-2)}{2 } + 2 \theta L \Big)  \ \textrm{ and } \ K_{\theta} := 2^{\theta-1}L{}^{\theta} \Big( 1 + \theta + \frac{p(p-1)}{2} \Delta^{\frac{\theta}{2}-1} \Big)   \mathbb E \vert  Z \vert^{\theta},  ~ Z \sim {\cal N}(0;I_d).
 \]
 \end{thm}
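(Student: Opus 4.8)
The plan is to reproduce, in the product setting, the one‑step error recursion between the genuine Euler scheme $\bar X_k$ and its Markovian quantization $\hat X_k^{\Gamma_k}$ and then to unroll it by a discrete Gronwall argument, exactly as for the one‑dimensional Markovian recursive quantization of \cite{PagSagMQ}. First I would decompose the error at step $k+1$ with the triangle inequality, exploiting the structure of the algorithm \eqref{EqAlgorithm}: writing $\bar X_{k+1}=\mathcal E_k(\bar X_k,Z_{k+1})$, $\tilde X_{k+1}=\mathcal E_k(\hat X_k^{\Gamma_k},Z_{k+1})$ and $\hat X_{k+1}^{\Gamma_{k+1}}=\mathrm{Proj}_{\Gamma_{k+1}}(\tilde X_{k+1})$, one gets
\[
\Vert \bar X_{k+1}-\hat X_{k+1}^{\Gamma_{k+1}}\Vert_{_2}\le \Vert \mathcal E_k(\bar X_k,Z_{k+1})-\mathcal E_k(\hat X_k^{\Gamma_k},Z_{k+1})\Vert_{_2}+\Vert \tilde X_{k+1}-\hat X_{k+1}^{\Gamma_{k+1}}\Vert_{_2},
\]
where the first summand is a \emph{propagation} term and the second is the \emph{marginal} (product) quantization error freshly introduced at step $k+1$.

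The key step is the $L^2$-stability of the Euler operator. Conditioning on $\mathcal F_k$ and using that $Z_{k+1}\sim\mathcal N(0;I_q)$ is centred and independent of $(\bar X_k,\hat X_k^{\Gamma_k})$, the cross term vanishes and $\mathbb E\big[|\mathcal E_k(x,Z)-\mathcal E_k(y,Z)|^2\big]=|(x-y)+\Delta(b_k(x)-b_k(y))|^2+\Delta\Vert\sigma_k(x)-\sigma_k(y)\Vert^2$. The Lipschitz bounds \eqref{LipAssb}--\eqref{LipAssSig} then give the pointwise factor $1+2\Delta[b]_{\rm Lip}+\Delta^2[b]_{\rm Lip}^2+\Delta[\sigma]_{\rm Lip}^2=1+2\Delta C_{b,\sigma}+\Delta^2[b]_{\rm Lip}^2$; since $[b]_{\rm Lip}\le C_{b,\sigma}$ this is $\le 1+2\Delta C_{b,\sigma}+2\Delta^2 C_{b,\sigma}^2\le e^{2\Delta C_{b,\sigma}}$ by the elementary inequality $1+u+u^2/2\le e^u$. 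Taking square roots yields the one‑step contraction $\Vert \mathcal E_k(\bar X_k,Z_{k+1})-\mathcal E_k(\hat X_k^{\Gamma_k},Z_{k+1})\Vert_{_2}\le e^{\Delta C_{b,\sigma}}\Vert\bar X_k-\hat X_k^{\Gamma_k}\Vert_{_2}$. Feeding this back and iterating from $\bar X_0=\hat X_0=x_0$ (so the error vanishes at $k=0$) gives, by induction,
\[
\Vert\bar X_k-\hat X_k^{\Gamma_k}\Vert_{_2}\le\sum_{k'=1}^k e^{(k-k')\Delta C_{b,\sigma}}\,\Vert\tilde X_{k'}-\hat X_{k'}^{\Gamma_{k'}}\Vert_{_2},
\]
which already carries the geometric weights of \eqref{EqTheoConvergenceRate}.

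It then remains to control each marginal error $\Vert\tilde X_{k'}-\hat X_{k'}^{\Gamma_{k'}}\Vert_{_2}$. Because $\hat X_{k'}$ is a \emph{product} quantization, I would apply Pierce's Lemma (Theorem~\ref{ThmPierceZad}(b)) componentwise in dimension one to each $\tilde X_{k'}^{\ell}$, use $\sigma_{2,\eta}(\tilde X_{k'}^{\ell})\le\Vert\tilde X_{k'}\Vert_{_{2+\eta}}$ (take $\zeta=0$), and sum the $d$ squared component errors, the per-component sizes being chosen so that $(N_{k'}^{\ell})^{-1}=N_{k'}^{-1/d}$; the resulting dimensional factor is absorbed into $a_{k'}$. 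The last ingredient is an a priori $L^{2+\eta}$-moment estimate on $\tilde X_{k'}=\mathcal E_{k'-1}(\hat X_{k'-1},Z_{k'})$: invoking the linear growth \eqref{EqLinGrowth} of $b,\sigma$, the $L^p$-contractivity $\Vert\hat X_{k'-1}\Vert_{_p}\le\Vert\tilde X_{k'-1}\Vert_{_p}$ of the stationary projection (Jensen), and a discrete linear recursion on $\mathbb E|\tilde X_{k'}|^{2+\eta}$, one recovers precisely the bracket in $a_{k'}$ — the solution $u_{k'}=A^{k'}u_0+B\frac{A^{k'}-1}{A-1}$ of such a recursion with $A=e^{(\kappa_p+K_p)\Delta}$ (so $A^{k'}=e^{(\kappa_p+K_p)t_{k'}}$) — with the constants $\kappa_\theta,K_\theta$ and the dimensional powers $d^{\,k(p/2-1)}$, $d^{\,p/2-1}$ tracking the $\mathbb R^d\!\to\!\mathbb R$ norm equivalences.

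The main obstacle is exactly this last moment control: carrying the sharp dependence on $d$, $\Delta$, $x_0$ and the growth constant $L$ through the recursion, and matching the prefactor $e^{\Delta C_{b,\sigma}(k-k')/p}$ of $a_{k'}$, is where the bookkeeping is delicate. Rather than redo it, I would import these $L^p$-moment estimates for the recursive scheme directly from \cite{PagSagMQ} and plug them into the cumulative bound above. Everything else — the error decomposition, the one‑step $L^2$ contraction, and the Gronwall unrolling — is elementary and essentially dimension‑robust, the product structure intervening only through the componentwise use of Pierce's Lemma.
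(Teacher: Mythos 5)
Your proposal follows essentially the same route as the paper's own proof: the cumulative bound $\Vert \bar X_k - \hat X_k^{\Gamma_k}\Vert_{_2}\le \sum_{k'=1}^{k}e^{(k-k')\Delta C_{b,\sigma}}\Vert \tilde X_{k'}-\hat X_{k'}^{\Gamma_{k'}}\Vert_{_2}$ (which the paper imports wholesale from Lemma 3.2 of \cite{PagSagMQ} and you instead rederive via the one-step $L^2$-contraction of the Euler operator), followed by a componentwise, dimension-one application of Pierce's lemma, the componentwise stationarity/Jensen moment transfer, and the $L^{2+\eta}$-moment estimates imported from \cite{PagSagMQ}. The only slight imprecision is your displayed contractivity $\Vert \hat X_{k'-1}\Vert_{_p}\le \Vert \tilde X_{k'-1}\Vert_{_p}$, which holds componentwise but costs the factor $d^{\frac{p}{2}-1}$ for the full vector (since only the marginals, not the product quantizer, are stationary) --- a factor you do acknowledge when tracking the dimensional powers in $a_{k'}$, exactly as the paper does.
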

 

\begin{proof}[{\it Proof}. (of Theorem \ref{TheoConvergenceRate})]
Recall that  for every $k \ge 0$, $\hat X_k = (\hat X_k^1, \ldots, \hat X_k^d)$, where  $\hat X_k^{\ell}$ is the quantization of the $\ell$-th component $\bar X_k^{\ell}$ of the vector $\bar X_k$.   Therefore, following step by step the proof of  Lemma 3.2. in \cite{PagSagMQ},  we obtain   for every   $k \ge 1$: 
\[  
\Vert  \bar X_k -  \hat X_k \Vert_{_2} \le  \sum_{k=1}^{k}  e^{ (t_k-t_{k'})   C_{b,\sigma} }  \Vert  \tilde X_{k'} -  \hat X_{k'}^{\Gamma_{k'}} \Vert_{_2},
\]
where $C_{b,\sigma} = [b]_{\rm Lip} + \frac{1}{2} [\sigma]_{\rm Lip}^2$.  Using the definition of $\hat X_k$ combined with Pierce's Lemma (see Theorem \ref{ThmPierceZad}$(b)$) yields
for every $k=1, \ldots, n$,  for any $\eta \in (0,1]$,   
\begin{eqnarray*}
 \Vert  \bar X_k -  \hat X_k \Vert_{_2}  
 & \le  & K_{2,\eta}   \sum_{k'=1}^{k}  e^{ (k-k') \Delta  C_{b,\sigma} }     \Vert \tilde X_{k'}^{\ell}  \Vert_{_{2+\eta}} (N_{k'}^{\ell})^{^{-1/d}}  .
  \end{eqnarray*}
Recall that each component $\hat X_{k}^{\ell}$ of the vector $\hat X_{k}$ is defined as 
  \[
  \widehat X_{k}^{\ell}= \widehat{ \tilde X_k^{\ell}}, \; \ell=1,\ldots,d, 
  \]
where $ \widehat{ \tilde X_k^{\ell}}$ is an optimal quadratic quantization of $\tilde X_k^{\ell}$. Hence each component of $\hat X_{k}$  is stationary with respect to $ \tilde X_k^{\ell}$, that is $ \hat X_k= \mathbb E( \tilde X^{\ell}_{k}\,|\, \widehat{ \tilde X_k^{\ell}})=\widehat{ \tilde X_k^{\ell}}$. We deduce that  
\[
\mathbb E \vert \hat X_{k}\vert^p \le d^{\frac p2-1}\mathbb E \vert \tilde X_{k}\vert^p.
\]

 Following the lines of the proof of Lemma 3.2. in \cite{PagSagMQ}, we  easily show  that for every $\ell \in \{1, \ldots,d\}$, $ \Vert \tilde X_{k'}^{\ell}  \Vert_{_{2+\eta}}  \le a_{k'}(b, \sigma,k, d,\Delta,x_0 ,L,2+\eta)$. 
This completes the proof.
\end{proof}

 \section{Numerical examples} \label{SectNum}

First of all, keep in mind that the computations for all numerical examples  have been performed  on a CPU 2.7 GHz and 4 Go
memory computer.


\subsection{Pricing of a Basket European option}\label{basket}

We consider a European Basket  option  with  maturity $T$ and  strike $K$, based on two stocks with  prices $S^1$ and $S^2$ with associated weights $w_1$ and $w_2$. We suppose that $S^1$ and $S^2$  evolve following the dynamics
\begin{equation}   \label{EqBasketOption}
   \left \{ \begin{array}{l}

 dS_t^1  = r S_t^1  +  \rho \, \sigma_1  S_t^1 dW_{t}^1 +   \sqrt{1-\rho^2}\, \sigma_1 S_t^1  dW_t^2  \\
    dS_t^2  = r S_t^2 dt  + \sigma_2 S_t^2 dW_{t}^1  
    \end{array}  
 \right.
 \end{equation}
where $W^1$ and $W^2$ are two independent Brownian motion,   $r$ is the interest rate and  $\rho \in [-1,1]$, is the correlation term.

  We know that in this case,  the price at time $t=0$ of the call option reads
\begin{equation}   \label{EqpriceBasket}
e^{-rT} \mathbb E \big[ \max( w_1 S^1_T + w_2 S^2_T -K,0) \big] = e^{-rT}  \mathbb E F(X_T), \quad  X = (S^1,S^2), 
\end{equation}
 where the function $F$ is defined, for every $x = (s^1,s^2) \in \mathbb R^2$, by  $F(x)  = \max(w_1s^1 + w_2 s^2 -K,0)$. Using the Markovian product quantization, the price of the Basket European option is approximated by 
 \begin{equation}  \label{EqNumapproxBasket}
e^{-rT} \sum_{j \in I_n} F(x_n^j) \mathbb P(\widehat X_n = x_n^j).
\end{equation}
For the numerical exercises  we will use the following set of  parameters:
\[
r=0.04, \quad \sigma_1 = 0.3, \quad \sigma_2=0.4, \quad \rho=0.5, \quad w_1 = w_2 = 0.5, \quad S^1_0 = 100, \quad S^2_0 =100, \quad T=1.
\]
The benchmark price is given by the algorithm developed in \cite{Ju}.  The results are given in Table \ref{tablebask1}  and  Table \ref{tablebask2}. For both tables,  we consider Call prices for the strikes $K \in \{80, 85, 90, 95, 100 \}$ and Put prices for strikes $K$ lying  to $\{100, 105, 110, 115, 120 \}$. We also depict in Table \ref{tablebask1}  the associated relative error  \big(${\rm Err.} =  {\rm Err.}  = \frac{| \textrm{Price} - MQ_{N}^{n}|}{ \textrm{Price}} $ \big) between the benchmark prices (Price) and the prices obtained from the Markovian and product quantization method of size $N=N_1=N_2$ with $n$ discretization steps (denoted by ${\rm MQ}_{N}^n$) and the computation time (in seconds) for the Markovian and product quantization method. 

In Table   \ref{tablebask1},  we set the number of time steps $n=10$ and make  the sizes $N_1$ et $N_2$ of the marginal quantizers varying  whereas, for the results of Table \ref{tablebask2}, we set $N_1=N_2 =30$ and make varying the number $n$ of time steps.

 We verify, as expected,  that increasing the size of the marginal quantizers lead to more precise results (see Theorem \ref{TheoConvergenceRate}). However, this increases the computation time.  On the other hand, it is also clear from  Theorem \ref{TheoConvergenceRate} that  fixing the marginal quantization size,  the number $n$ of the time  steps increases the global quantization error. 
From the numerical results,  the choice $N_{1}=N_{2}=20$ and $n=10$ seems to be a  good compromise.   
\begin{table}[h!]
\begin{center}
\begin{tabular}{|c|c||c|c||c|c||c|c|}
\hline
$K$ &Price &   ${\rm MQ}_{10}^{10}$  & Err. $(\%)$ & ${\rm MQ}_{20}^{10}$  &Err. $(\%)$ & ${\rm MQ}_{30}^{10}$  & Err. $(\%)$ \\
\hline
$     80 $&$ 25.9491 $&$ 25.4427 $&$ 1.9516 $&$ 25.8721 $&$ 0.2966 $&$ 25.9656 $&$ 0.0636 $\\
$     85 $&$ 22.4481 $&$ 21.9007 $&$ 2.4384 $&$ 22.3543 $&$ 0.4177 $&$ 22.4532 $&$ 0.0229 $\\
$     90 $&$ 19.2736 $&$ 18.6934 $&$ 3.0101 $&$ 19.1596 $&$ 0.5915 $&$ 19.2612 $&$ 0.0645 $\\
$     95 $&$ 16.4323 $&$ 15.8139 $&$ 3.7633 $&$ 16.2935 $&$ 0.8450 $&$ 16.3964 $&$ 0.2183 $\\
$    100 $&$ 13.9197 $&$ 13.2858 $&$ 4.5541 $&$ 13.7537 $&$ 1.1929 $&$ 13.8566 $&$ 0.4535 $\\
\hline
$    100 $&$ 9.9987 $&$ 9.3727 $&$ 6.2602 $&$ 9.8406 $&$ 1.5810 $&$ 9.9435 $&$ 0.5515 $\\
$    105 $&$ 12.6050 $&$ 11.9509 $&$ 5.1895 $&$ 12.4218 $&$ 1.4536 $&$ 12.5218 $&$ 0.6603 $\\
$    110 $&$ 15.5060 $&$ 14.8264 $&$ 4.3828 $&$ 15.2981 $&$ 1.3408 $&$ 15.3965 $&$ 0.7062 $\\
$    115 $&$ 18.6768 $&$ 18.0111 $&$ 3.5646 $&$ 18.4441 $&$ 1.2461 $&$ 18.5422 $&$ 0.7204 $\\
$    120 $&$ 22.0904 $&$ 21.4207 $&$ 3.0317 $&$ 21.8432 $&$ 1.1189 $&$ 21.9345 $&$ 0.7055 $\\
\hline
Time $(s)$& & $0.49$ & & $8.41$ & & $41.82$ & \\
\hline
\end{tabular}
\end{center}
\caption{\footnotesize Prices of a Basket option. The prices correspond to Call prices for strikes $K \in \{80, 85, 90, 95, 100\}$ and to Put prices for strikes $K \in \{ 100, 105, 110, 115, 120 \}$. The  number of time steps $n= 10$. Size of the grids $N_{1}= N_{2}=N$. The error ${\rm Err.}  = \frac{\vert \textrm{Price} - {\rm MQ}_{N}^{n}  \vert}{\textrm{Price}} $ corresponds to the relative error between the benchmark price (Price) and Product Markovian quantization price ${\rm MQ}_{N}^{n}$ of size $N$ with $n$ discretization steps.}\label{tablebask1}
\end{table} 

\begin{table}[h!]
\begin{center}
\begin{tabular}{|c|c||c|c||c|c||c|c|}
\hline
$K$ &Price &   ${\rm MQ}_{30}^{20}$  &Err. $(\%)$ & ${\rm MQ}_{30}^{30}$  &Err. $(\%)$ & ${\rm MQ}_{30}^{40}$  &Err. $(\%)$ \\
\hline
$     80 $&$ 25.9491 $&$ 25.8461 $&$ 0.3969 $&$ 25.7646 $&$ 0.7112 $&$ 25.6937 $&$ 0.9844 $\\
$     85 $&$ 22.4481 $&$ 22.3335 $&$ 0.5107 $&$ 22.2473 $&$ 0.8943 $&$ 22.1731 $&$ 1.2252 $\\
$     90 $&$ 19.2736 $&$ 19.1460 $&$ 0.6618 $&$ 19.0599 $&$ 1.1086 $&$ 18.9837 $&$ 1.5041 $\\
$     95 $&$ 16.4323 $&$ 16.2916 $&$ 0.8561 $&$ 16.2074 $&$ 1.3686 $&$ 16.1310 $&$ 1.8336 $\\
$    100 $&$ 13.9197 $&$ 13.7660 $&$ 1.1041 $&$ 13.6861 $&$ 1.6781 $&$ 13.6117 $&$ 2.2130 $\\
\hline
$    100 $&$ 9.9987 $&$ 9.8490 $&$ 1.4971 $&$ 9.7677 $&$ 2.3095 $&$ 9.6926 $&$ 3.0608 $\\
$    105 $&$ 12.6050 $&$ 12.4428 $&$ 1.2869 $&$ 12.3675 $&$ 1.8842 $&$ 12.2954 $&$ 2.4566 $\\
$    110 $&$ 15.5060 $&$ 15.3328 $&$ 1.1173 $&$ 15.2642 $&$ 1.5593 $&$ 15.1966 $&$ 1.9956 $\\
$    115 $&$ 18.6768 $&$ 18.4954 $&$ 0.9714 $&$ 18.4342 $&$ 1.2990 $&$ 18.3717 $&$ 1.6337 $\\
$    120 $&$ 22.0904 $&$ 21.9045 $&$ 0.8413 $&$ 21.8506 $&$ 1.0854 $&$ 21.7931 $&$ 1.3459 $\\
\hline
Time $(s)$& & $84.13$ & & $151.90$ & & $194.05$ & \\
\hline
\end{tabular}
\end{center}
\caption{\footnotesize Prices of a Basket option. The prices correspond to Call prices for strikes $K \in \{80, 85, 90, 95, 100\}$ and to Put prices for strikes $K \in \{ 100, 105, 110, 115, 120 \}$. Size of the grids $N_{1}= N_{2}=30$ and the  number of time steps $n \in {20, 30, 40}$. The error ${\rm Err.}  =  \frac{\vert \textrm{Price} - {\rm MQ}_{N}^{n} \vert}{\textrm{Price}}$ corresponds to the relative error between the benchmark price (Price) and Product Markovian quantization price ${\rm MQ}_{N}^{n}$ of size $N$ with $n$ discretization steps.}\label{tablebask2}
\end{table} 
   
 
 \subsection{Pricing of a European  option in the Heston model} 

  In this  example, we consider  a European option with maturity $T$ and strike $K$, in the Heston model, introduced in \cite{Heston93}, where  the stock price $S$  and its  stochastic variance $V$ evolve following the dynamics
\begin{equation}  \label{EqHestonIntro}
   \left \{ \begin{array}{l}
   dS_t  = r  S_t dt +   \sqrt{V_t} S_t dW_{t}^1 \\
 dV_t  = \kappa(\theta -V_t) dt  +  \rho \, \sigma  \sqrt{V_t} dW_{t}^1 +   \sqrt{1-\rho^2}\, \sigma \sqrt{V_t}  dW_t^2.  
    \end{array}  
 \right.
 \end{equation}
In the previous equation, the parameter $r$ is the interest rate; $\kappa>0$, is the rate at which  $V$ reverts to the long  running average  variance $\theta>0$;  the parameter $\sigma>0$, is the volatility of the variance and $\rho \in [-1,1]$, is  the correlation term.  In this case, the price of the call  at time $t=0$ reads
\begin{equation} \label{EqpriceHeston}
e^{-rT} \mathbb E \big[\max(S_T - K,0) \big] = e^{-rT} \mathbb E H(X_T), \quad X=(S,V),
\end{equation}
where $H(x) = \max(x^1-K,0)$, for $x = (x^1,x^2) \in \mathbb R^2$. 

 Using the Markovian and product quantization method, the price  of  a call in the Heston model is approximated as
\begin{equation}  \label{EqApproxHestonCall2}
 e^{-rT} \sum_{j_1 = 1}^{N_n^1} \max(s_n^{1j_1}-K,0)  \, \mathbb P(\hat S_n^1 = s_n^{1j_1}),
 \end{equation}
where $\mathbb P(\hat S_n^1 = s_n^{1j_1})$ is computed according to \eqref{EqProbaVector}.\\

For the numerical experiments  we will use the following parameters, obtained from a calibration on market prices:
\[
r=0.04, \quad \kappa = 2.3924, \quad \theta=0.0929,\quad \sigma=0.6903,\quad \rho=-0.82,  \quad S_0 = 100, \quad V_0 =0.0719, \quad T=1.
\]

The pricing of European options under local and stochastic volatility models using recursive quantization techniques has already been studied, see e.g. \cite{PagSagMQ} and \cite{CalFioGra1} for the local volatility case, and \cite{CalFioGra2} for the stochastic volatility case. However, the method we present here is more general and is model free compared to \cite{CalFioGra2} where the method depends on the structure of the model.
   
  The benchmark price is obtained using a Fourier based approach like in \cite{CarrMadan} , since the Heston model is affine and the characteristic function is known in closed form.
   
   Due to the fact that the derivative only depends  on the price and not on the variance, it seems reasonable to choose the marginal grid size $N_1$ greater than $N_2$.  
 To guarantee a good balance between precision and computational time, setting $N_1 = 2 N_2$ seems to be a good trade-off. 
 As for the previous example, we consider Call prices for the strikes $K \in \{80, 85, 90, 95, 100 \}$ and Put prices for strikes $K$ belonging  to $\{100, 105, 110, 115, 120 \}$ and depict in Table   \ref{tableHest1} and Table   \ref{tableHest2}  the corresponding prices (by making varying even $n$ or the sizes $N_1$ and $N_2$ of the quantizers) and the associated relative errors  \big(${\rm Err.}  = \frac{\vert  \textrm{Price} - MQ_{N_1,N_2}^{n}\vert}{\textrm{Price}} $ \big) between the Fourier prices (Price) and the prices obtained from the Markovian and product quantization method of size $N=N_1 \times N_2$ with $n$ discretization steps (denoted by ${\rm MQ}_{N_1,N_2}^n$) and the computation time (in seconds) for the Markovian and product quantization method. 

      
  The best choice from the point of view of accuracy and computational effort, is obtained by taking the size $N_{1}$ of the quantization of the price process $S$ equal to $20$, by taking $N_{2}=10$ for the variance process $V$, and by setting $n=20$.\\

\begin{table}[h!]
\begin{center}
\begin{tabular}{|c|c||c|c||c|c||c|c|}
\hline
$K$ &Price &   ${\rm MQ}_{10,6}^{20}$  & Err. $(\%)$ & ${\rm MQ}_{20,10}^{20}$  &Err. $(\%)$ & ${\rm MQ}_{30,16}^{20}$ &Err. $(\%)$ \\
\hline
$    80 $ &$ 26.3910 $&$ 25.8790 $&$ 1.9401 $&$ 26.2684 $&$ 0.4645 $&$ 26.3705 $&$ 0.0777 $\\
$    85 $ &$ 22.6069 $&$ 22.0686 $&$ 2.3813 $&$ 22.4879 $&$ 0.5264 $&$ 22.5804 $&$ 0.1174 $\\
$    90 $ &$ 19.0506 $&$ 18.5191 $&$ 2.7897 $&$ 18.9360 $&$ 0.6012 $&$ 19.0478 $&$ 0.0148 $\\
$    95 $ &$ 15.7524 $&$ 15.2279 $&$ 3.3295 $&$ 15.6471 $&$ 0.6681 $&$ 15.7634 $&$ 0.0700 $\\
$   100 $ &$ 12.7422 $&$ 12.2750 $&$ 3.6668 $&$ 12.6532 $&$ 0.6987 $&$ 12.7560 $&$ 0.1083 $\\
\hline
$   100 $ &$ 8.8212 $&$ 8.3579 $&$ 5.2515 $&$ 8.7361 $&$ 0.9639 $&$ 8.8390 $&$ 0.2017 $\\
$   105 $ &$ 10.9308 $&$ 10.4676 $&$ 4.2371 $&$ 10.8690 $&$ 0.5655 $&$ 10.9861 $&$ 0.5056 $\\
$   110 $ &$ 13.3794 $&$ 13.0481 $&$ 2.4763 $&$ 13.3460 $&$ 0.2497 $&$ 13.4429 $&$ 0.4741 $\\
$   115 $ &$ 16.1828 $&$ 15.8037 $&$ 2.3425$&$ 16.1796 $&$ 0.0201 $&$ 16.2862 $&$ 0.6389 $\\
$   120 $ &$ 19.3456 $&$ 19.1549 $&$ 0.9855 $&$ 19.3697 $&$ 0.1250 $&$ 19.4603 $&$ 0.5929 $\\
\hline
Time $(s)$& & $0.52$ & & $4.24$ & & $24.82$ & \\
\hline
\end{tabular}
\end{center}
\caption{ \footnotesize Prices in the Heston model. The prices correspond to Call prices for strikes $K \in \{80, 85, 90, 95, 100\}$ and to Put prices for strikes $K \in \{ 100, 105, 110, 115, 120 \}$. The  number of time steps $n= 20$.  The error ${\rm Err.}  = \frac{| \textrm{Price} - {\rm MQ}_{N_1,N_2}^{n}|}{  \textrm{ Price}}$ corresponds to the relative error between the benchmark price (Price) and Product Markovian quantization price ${\rm MQ}_{N_1,N_2}^{n}$ of sizes $N = N_1 \times N_2$ with $n$ discretization steps.}\label{tableHest1}
\end{table}

\begin{table}[h!]
\begin{center}
\begin{tabular}{|c|c||c|c||c|c||c|c|}
\hline
Strike &Price &  ${\rm MQ}_{20,10}^{10}$  & Err. $(\%)$ & ${\rm MQ}_{20,10}^{30}$  &Err. $(\%)$ & ${\rm MQ}_{20,10}^{40}$  &Err. $(\%)$ \\
\hline
$    80 $ &$ 26.3910 $&$ 26.4632 $&$ 0.2737 $&$ 26.1709 $&$ 0.8339 $&$ 26.0900 $&$ 1.1406 $\\
$    85 $ &$ 22.6069 $&$ 22.7001 $&$ 0.4121 $&$ 22.3799 $&$ 1.0041 $&$ 22.2960 $&$ 1.3752 $\\
$    90 $ &$ 19.0506 $&$ 19.1619 $&$ 0.5841 $&$ 18.8229 $&$ 1.1954 $&$ 18.7418 $&$ 1.6207 $\\
$    95 $ &$ 15.7524 $&$ 15.8851 $&$ 0.8428 $&$ 15.5342 $&$ 1.3851 $&$ 15.4616 $&$ 1.8460 $\\
$   100 $ &$ 12.7422 $&$ 12.9032 $&$ 1.2635 $&$ 12.5458 $&$ 1.5415 $&$ 12.4863 $&$ 2.0083 $\\
\hline
$   100 $ &$ 8.8212 $&$ 8.9902 $&$ 1.9155 $&$ 8.6274 $&$ 2.1964 $&$ 8.5673 $&$ 2.8784 $\\
$   105 $ &$ 10.9308 $&$ 11.1520 $&$ 2.0235 $&$ 10.7704 $&$ 1.4670 $&$ 10.7266 $&$ 1.8678 $\\
$   110 $ &$ 13.3794 $&$ 13.6555 $&$ 2.0633 $&$ 13.2609 $&$ 0.8860 $&$ 13.2347 $&$ 1.0820 $\\
$   115 $ &$ 16.1828 $&$ 16.5099 $&$ 2.0212 $&$ 16.1090 $&$ 0.4562 $&$ 16.0992 $&$ 0.5165 $\\
$   120 $ &$ 19.3456 $&$ 19.7143 $&$ 1.9063 $&$ 19.3123 $&$ 0.1717 $&$ 19.3157 $&$ 0.1545 $\\
\hline
Time $(s)$& & $2.18$ & & $6.69$ & & $8.98$ & \\
\hline
\end{tabular}
\end{center}
\caption{ \footnotesize Prices in the Heston model. The prices correspond to Call prices for strikes $K \in \{80, 85, 90, 95, 100\}$ and to Put prices for strikes $K \in \{ 100, 105, 110, 115, 120 \}$.  The error ${\rm Err.}  = \frac{| \textrm{Price} - {\rm MQ}_{N_1,N_2}^{n}|}{  \textrm{ Price}}$ corresponds to the relative error between the benchmark price (Price) and Product Markovian quantization price ${\rm MQ}_{N_1,N_2}^{n}$ of sizes $N_1=20,  N_2=20$ with $n$ discretization steps.}\label{tableHest2}
\end{table}

\subsection{Approximation of BSDE}

 In this section, we  consider a  Markovian BSDE 
   \begin{equation}   \label{EqBSDEIntro1}
 Y_t  =  \xi  + \int_t^T f(s,X_s,Y_s,Z_s) ds - \int_t^T Z_s \cdot dW_s,\quad t \in [0,T], 
 \end{equation}
 where  $W$ is a $q$-dimensional Brownian motion,  $(Z_t)_{t \in [0,T]}$ is a square integrable progressively measurable process taking values in $\mathbb R^q$,   $f : [0,T]{\small \times} \mathbb R^d $  ${\small \times} \mathbb R {\small \times \mathbb R^q} \rightarrow \mathbb R$.   The terminal condition is of the form  $\xi = h(X_T)$, for a given Borel function $h:\mathbb R^d \rightarrow \mathbb R$,  where  $X_T$ is the value at time $T$ of a Brownian diffusion process $(X_t)_{t \geq 0}$,  strong solution to the  stochastic differential equation:
 \begin{equation} \label{EqXIntro}
 X_t  = x  +  \int_0^t b(s,X_s)ds  + \int_0^t \sigma(s,X_s)  dW_s,   \qquad x \in \mathbb R^d.
 \end{equation}   
As pointed out in the introduction, many (time) discretization schemes  and several (spacial) numerical approximation method of the solution of such as BSDE are proposed in the literature (we refer for example to~\cite{BalPagPri0, BouTou, CriManTou, GobTur, HuNuaSon, GobLopTurVaz, BenDen, PagSagBSDE}).  Our aim in this section is to test the performances of our method to the numerical scheme proposed in ~\cite{PagSagBSDE}.  To this end, we first show that  the Markovian product quantization method allows us to compute the term appearing in the  numerical schemes proposed in~\cite{PagSagBSDE} (as well as for several numerical schemes)  using (semi)-closed formula. We then test the performance of our method to a BSDE associated to the price of the Call option in the Black-Scholes model and to a multidimensional  BSDE.

\subsubsection{Explicit numerical scheme for the BSDE}
 Let us  set  for $i \in I_k$, $j \in I_{k+1}$,
\begin{eqnarray*}
& & p_k^i  = \mathbb P(\hat X_k  = x_k^i), \  k=0, \cdots,n \\
& \textrm{ and } \quad  &  p_k^{ij} = \mathbb P(\hat X_{k+1} = x_{k+1}^j \vert \hat X_{k} = x_k^i),  \ k=0, \cdots, n-1.
\end{eqnarray*}
Setting $\hat Y_k = \hat y_k(\hat X_k)$, for every $k \in \{ 0, \cdots,n\}$, the quantized BSDE scheme reads as
\begin{equation*}
   \left \{ \begin{array}{l}
   \hat y_{n}(x_n^i)  =     h( x_{n}^i) \hspace{6.04cm }  x_n^i  \in\Gamma_n\\
   \hat y_{k}(x_k^i)   = \hat {\alpha}_k(x_k^i)   + \Delta_n f \big(t_k, x_{k}^i, \hat {\alpha}_k(x_k^i) , \hat {\beta}_k(x_k^i)  \big) \qquad  \   x_k^i \in \Gamma_k
    \end{array}  
 \right.
 \end{equation*}
 where for $\ k=0, \ldots,n-1$,
\begin{equation}
\hat{\alpha}_k(x_k^i) = \sum_{j \in I_{k+1}}  \hat y_{k+1}(x_{k+1}^j) \,  p_k^{ij} \quad \textrm{ and } \quad \hat {\beta}_k(x_k^i)   =  \frac{1}{\sqrt{\Delta_n}} \sum_{j\in I_{k+1}}    \hat y_{k+1}(x_{k+1}^j) \, \Lambda_k^{ij},
\end{equation}
with 
\[
\Lambda_{k}^{ij}    =  \mathbb{E}\big(Z_{k+1}\mathds { 1}_{\{ \hat X_{k+1}=x_{k+1}^j  \}}\big \vert  \hat X_k=x_k^i \big).
\]

In the following, we give closed  formula for the $\Lambda_{k}^{ij}$'s. We will first suppose  that the components of $\bar X_k$ are independent, for every $k=0, \ldots,n$.

\begin{prop} Suppose that $q=d$ and  ${\cal E}_k^{\ell}(x, Z_{k+1}) = {\cal E}_k^{\ell}(x, Z_{k+1}^{\ell})$, for every $\ell \in \{1, \ldots, d\}$ and $x \in \mathbb R^d$. Then 
\begin{equation}
\Lambda^{ij, \ell}_k = \big(\Phi'_0(x_{k+1}^{\ell,j_{\ell}-}(x_k^i)) -   \Phi'_0(x_{k+1}^{\ell,j_{\ell}+}(x_k^i)) \big)   \prod_{\ell' \not=\ell}^d  \big[\Phi_0 \big( x_{k+1}^{\ell',j_{\ell'}+}(x_k^{i}) \big)  -   \Phi_0 \big( x_{k+1}^{\ell',j_{\ell'}-}(x_k^{i}) \big) \big]
\end{equation}
\end{prop}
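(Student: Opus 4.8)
The plan is to compute the vector $\Lambda_k^{ij}\in\mathbb R^d$ componentwise, exploiting the independence of the one-dimensional Euler operators granted by the hypothesis ${\cal E}_k^{\ell}(x,Z_{k+1})={\cal E}_k^{\ell}(x,Z_{k+1}^{\ell})$ (which, with $q=d$, means the volatility matrix is diagonal, exactly the setting of Proposition \ref{PropCasIndep}). First I would condition on $\hat X_k=x_k^i$ and rewrite the event $\{\hat X_{k+1}=x_{k+1}^j\}$ as the intersection, over $\ell'\in\{1,\ldots,d\}$, of the events that the $\ell'$-th Euler operator lands in the Voronoi slab $C_{j_{\ell'}}(\Gamma_{k+1}^{\ell'})$. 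This gives, for the $\ell$-th coordinate,
\[
\Lambda_k^{ij,\ell}=\mathbb E\Big(Z_{k+1}^{\ell}\prod_{\ell'=1}^d \mathds{1}_{\{{\cal E}_k^{\ell'}(x_k^i,Z_{k+1}^{\ell'})\in C_{j_{\ell'}}(\Gamma_{k+1}^{\ell'})\}}\Big).
\]

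The second step is the factorization. Since each indicator in the product depends only on $Z_{k+1}^{\ell'}$ and the coordinates of $Z_{k+1}\sim{\cal N}(0;I_d)$ are independent, the expectation splits into a product of $d$ one-dimensional expectations. The $d-1$ factors with $\ell'\neq\ell$ carry no $Z_{k+1}^{\ell}$ and are exactly the marginal transition probabilities $\mathbb P\big({\cal E}_k^{\ell'}(x_k^i,Z_{k+1}^{\ell'})\in C_{j_{\ell'}}(\Gamma_{k+1}^{\ell'})\big)$, which by Proposition \ref{PropCasIndep} equal $\Phi_0\big(x_{k+1}^{\ell',j_{\ell'}+}(x_k^i)\big)-\Phi_0\big(x_{k+1}^{\ell',j_{\ell'}-}(x_k^i)\big)$; these reproduce the product displayed in the statement.

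It then remains to evaluate the distinguished $\ell$-th factor $\mathbb E\big(Z_{k+1}^{\ell}\,\mathds{1}_{\{{\cal E}_k^{\ell}(x_k^i,Z_{k+1}^{\ell})\in C_{j_{\ell}}(\Gamma_{k+1}^{\ell})\}}\big)$. Inverting the affine map $z\mapsto m_k^{\ell}(x_k^i)+\sqrt{\Delta}\,\sigma_k^{\ell\ell}(x_k^i)\,z$ (here positivity of $\sigma_k^{\ell\ell}$, as in Proposition \ref{PropCasIndep}, preserves the orientation of the interval) rewrites the event as $\{x_{k+1}^{\ell,j_{\ell}-}(x_k^i)\le Z_{k+1}^{\ell}\le x_{k+1}^{\ell,j_{\ell}+}(x_k^i)\}$, so the factor reduces to the Gaussian integral $\int_a^b z\,\Phi_0'(z)\,dz$ with $a=x_{k+1}^{\ell,j_{\ell}-}(x_k^i)$ and $b=x_{k+1}^{\ell,j_{\ell}+}(x_k^i)$. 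This is the only genuine computation, and it is immediate from the identity $z\,\Phi_0'(z)=-(\Phi_0')'(z)$, giving $\int_a^b z\,\Phi_0'(z)\,dz=\Phi_0'(a)-\Phi_0'(b)$. Assembling the two pieces yields precisely the claimed formula. The only real subtlety is bookkeeping of orientation in this last integral, namely that it produces $\Phi_0'(x_{k+1}^{\ell,j_{\ell}-}(x_k^i))-\Phi_0'(x_{k+1}^{\ell,j_{\ell}+}(x_k^i))$ and not the reverse; everything else is routine.
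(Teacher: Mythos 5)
Your proposal is correct and follows essentially the same route as the paper's proof: write $\Lambda_k^{ij,\ell}$ as an expectation of $Z_{k+1}^{\ell}$ times a product of coordinatewise indicators, split it by independence of the components of $Z_{k+1}$ into the marginal transition probabilities (the $\Phi_0$ differences) times the distinguished $\ell$-th factor, and evaluate that factor as $\Phi_0'(x_{k+1}^{\ell,j_{\ell}-}(x_k^i))-\Phi_0'(x_{k+1}^{\ell,j_{\ell}+}(x_k^i))$. The only difference is that you carry out the one-dimensional Gaussian integral $\int_a^b z\,\Phi_0'(z)\,dz$ explicitly, whereas the paper simply asserts this identity; this is a welcome addition, not a departure.
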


\begin{proof}  [{\it \textbf{Proof.}}] Let us set $v^{\ell,j_{\ell}+}:=x_{k+1}^{\ell,j_{\ell}+1/2}$ and $v^{\ell,j_{\ell}-} = x_{k+1}^{\ell,j_{\ell}-1/2}$. 
We have 
\setlength\arraycolsep{1pt}
\begin{eqnarray*}
\Lambda^{ij}_k &=& \mathbb E\big(Z_{k+1} \mathds{1}_{\{ {\cal E}_k(\hat X_k,Z_{k+1})  \in C_j(\Gamma_{k+1})  \}} \big\vert \hat X_k = x_k^i \big) \\
& = & \mathbb E\big(Z_{k+1} \mathds{1}_{\{ {\cal E}_k(x_k^i,Z_{k+1})  \in C_j(\Gamma_{k+1})  \}} \big) \\
& = & \mathbb E\Big(Z_{k+1} \prod_{\ell'=1}^d \mathds{1}_{ \big\{ {\cal E}_k^{\ell'}(x_k^i,Z_{k+1}^{\ell'})   \in \big(v^{\ell',j_{\ell'}-}, v^{\ell,j_{\ell'}+} \big)   \big\}} \Big).
\end{eqnarray*}
Since the components of $Z_{k+1}$ are independent,  it follows that for $\ell=1, \ldots, d$,  the component $(i_{\ell}, j)$ of $\Lambda_k^{ij}$ reads
\begin{eqnarray*}
\Lambda^{ij,\ell }_k &=& \mathbb E\Big(Z_{k+1}^{\ell}  \prod_{\ell'=1}^d \mathds{1}_{ \big\{ {\cal E}_k^{\ell'}(x_k^i,Z_{k+1}^{\ell'})   \in \big(v^{\ell',j_{\ell'}-}, v^{\ell,j_{\ell'}+} \big)   \big\}} \Big) \\
& = &   \mathbb E\Big(Z_{k+1}^{\ell}  \mathds{1}_{ \big\{ {\cal E}_k^{\ell}(x_k^i,Z_{k+1}^{\ell})   \in \big(v^{\ell',j_{\ell}-}, v^{\ell,j_{\ell}+} \big)   \big\}} \Big) \times \mathbb E\Big(\prod_{\ell' \not= \ell}^d \mathds{1}_{ \big\{ {\cal E}_k^{\ell'}(x_k^i,Z_{k+1}^{\ell'})   \in \big(v^{\ell',j_{\ell'}-}, v^{\ell',j_{\ell'}+} \big)   \big\}} \Big).
\end{eqnarray*}
It is clear that 
\[
\mathbb E\Big(\prod_{\ell' \not= \ell}^d \mathds{1}_{ \big\{ {\cal E}_k^{\ell'}(x_k^i,Z_{k+1}^{\ell'})   \in \big(v^{\ell',j_{\ell'}-}, v^{\ell',j_{\ell'}+} \big)   \big\}} \Big) =  \prod_{\ell' \not=\ell}^d  \big[\Phi_0 \big( x_{k+1}^{\ell',j_{\ell'}+}(x_k^{i},0) \big)  -   \Phi_0 \big( x_{k+1}^{\ell',j_{\ell'}-}(x_k^{i},0) \big) \big]. 
\]
On this other hand,
\[
  \mathbb E\Big(Z_{k+1}^{\ell}  \mathds{1}_{ \big\{ {\cal E}_k^{\ell}(x_k^i,Z_{k+1}^{\ell})   \in \big(v^{\ell,j_{\ell}-}, v^{\ell,j_{\ell}+} \big)   \big\}} \Big) = \Phi'_0(x_{k+1}^{\ell,j_{\ell}-}(x_k^i,0)) -   \Phi'_0(x_{k+1}^{\ell,j_{\ell}+}(x_k^i,0)). 
  \]
Combining both previous equalities gives the announced result.
\end{proof}

In the following, we compute the $p$-th component $\Lambda_k^{ij,p}$  of $\Lambda_k^{ij}$  in a general setting. Let us set
\begin{eqnarray*}
\mathbb{J}^{0,p}_{k,j_{\ell}}(x) &=& \Big\{ z \in \mathbb R, \quad \sqrt{\Delta} \sigma_k^{\ell p}(x)  z   \in \big(  x_{k+1}^{\ell,j_{\ell}-1/2} - m_{k}^{\ell}(x), x_{k+1}^{\ell,j_{\ell}+1/2} - m_{k}^{\ell}(x)  \big) \Big \} 
\end{eqnarray*}
and
\[
 \mathbb{L}^{0, p}_{k}(x) = \Big\{ \ell \in \{1, \ldots, d \}, \quad  \sum_{p' \not= p} \big(\sigma_k^{\ell p'} (x) \big)^2 = 0 \Big \}. 
 \]
We also set 
\[
 x_{k+1}^{\ell,p, j_{\ell}-}(x,z) = \frac{x_{k+1}^{\ell,j_{\ell}-1/2} - m_{k}^{\ell}(x) - \sqrt{\Delta} \sigma_k^{\ell p}(x) z  }{\sqrt{\Delta}  \Big( \sum_{p' \not=p} \big(\sigma_k^{\ell p'}(x) \big)^2   \Big)^{1/2} };  \  x_{k+1}^{\ell,p, j_{\ell}+}(x,z)= \frac{x_{k+1}^{\ell,j_{\ell}+1/2} - m_{k}^{\ell}(x) - \sqrt{\Delta} \sigma_k^{\ell p}(x) z  }{\sqrt{\Delta}  \Big( \sum_{p' \not=p} \big(\sigma_k^{\ell p'}(x) \big)^2   \Big)^{1/2} }.
\]

\begin{prop} For every $p \in \{1, \ldots, q\}$, the $p$-th component $\Lambda_k^{ij,p}$  of $\Lambda_k^{ij}$ reads
\begin{equation}
\Lambda^{i j, p}_k =   \mathbb E  \ \zeta \prod_{ \ell  \in \mathbb L^{0,p}_k(x^{i}_k)} \mathds{1}_{ \{  \zeta \in \mathbb J^{0,p}_{k,j_{\ell}}(x^{i}_k)  \}}     \big( \Phi_0(\alpha_{j}^p (x_k^{i},\zeta))  -   \Phi_0(\beta_{j}^p(x_k^{i},\zeta))\big) ^{+}, \quad \zeta \sim {\cal N}(0; 1)
\end{equation}
 (convention: $\prod_{\ell \in \emptyset} (\cdot) = 1$) where   for every $x \in \mathbb R^d$ and $z \in \mathbb R$,
\begin{equation}
 \alpha_{j}^p(x,z)=  \sup_{\ell \in  \big(\mathbb L^{0,p}_{k} (x)\big)^c} x_{k+1}^{\ell,p,j_{\ell}-} (x,z) \quad  \textrm{ and } \quad \beta_{j}^p(x,z)= \inf_{\ell \in \big(\mathbb L^{0,p}_{k} (x)\big)^c} x_{k+1}^{\ell,p,j_{\ell}+} (x,z).
\end{equation}
In particular, if  $p \in \{1, \ldots, q  \}$ and if  for every $\ell \in \{1, \ldots, d\}$ there exists ${p' \not= p}$ such that $ \sigma_k^{\ell p'} (x) \not=0$, then,
 \begin{equation}
\Lambda^{i j, p}_k =   \mathbb E\,   \zeta   \big( \Phi_0(\alpha_{j}^p (x_k^{i},\zeta))  -  \Phi_0(\beta_{j}^p(x_k^{i},\zeta))\big)^{+} .
\end{equation}

\end{prop}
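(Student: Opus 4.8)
The plan is to mirror the derivation of Proposition~\ref{PropTransitionProba}, the only genuinely new ingredient being the extra multiplicative factor $Z_{k+1}^p$. First I would reduce the conditional expectation to an unconditional one: exactly as in the proof of Proposition~\ref{PropTransitionProba}, the Markov structure of $(\hat X_k)_k$ together with the independence of $Z_{k+1}$ from $\hat X_k$ gives
\[
\Lambda_k^{ij,p}=\mathbb E\Big(Z_{k+1}^p\prod_{\ell=1}^d\mathds{1}_{\{\mathcal E_k^\ell(x_k^i,Z_{k+1})\in(v^{\ell,j_\ell-},v^{\ell,j_\ell+})\}}\Big),
\]
with $v^{\ell,j_\ell\pm}=x_{k+1}^{\ell,j_\ell\pm1/2}$, so that the quantity of interest is an expectation of an explicit function of the single Gaussian vector $Z_{k+1}\sim\mathcal N(0;I_q)$.

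Next I would single out the $p$-th coordinate of $Z_{k+1}$, which plays a double role here: it is both the multiplicative weight and the variable to be retained for the final one-dimensional cubature. Writing $\zeta=Z_{k+1}^p$ and conditioning on $\zeta$, each event $\{\mathcal E_k^\ell(x_k^i,Z_{k+1})\in(v^{\ell,j_\ell-},v^{\ell,j_\ell+})\}$ becomes a constraint on $S^\ell:=\sqrt{\Delta}\sum_{m\neq p}\sigma_k^{\ell m}(x_k^i)Z_{k+1}^m$ once the deterministic term $m_k^\ell(x_k^i)+\sqrt{\Delta}\sigma_k^{\ell p}(x_k^i)\zeta$ is moved to the other side. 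This is where the domain decomposition enters: if $\ell\in\mathbb L_k^{0,p}(x_k^i)$ then $S^\ell\equiv 0$, the constraint no longer involves the remaining increments and collapses to the condition $\zeta\in\mathbb J_{k,j_\ell}^{0,p}(x_k^i)$ on $\zeta$ alone, producing the indicator factors; if $\ell\notin\mathbb L_k^{0,p}(x_k^i)$, I would standardise $S^\ell$ by its standard deviation $\sqrt{\Delta}\big(\sum_{p'\neq p}(\sigma_k^{\ell p'}(x_k^i))^2\big)^{1/2}$, so that the event reads $x_{k+1}^{\ell,p,j_\ell-}(x_k^i,\zeta)<W^\ell<x_{k+1}^{\ell,p,j_\ell+}(x_k^i,\zeta)$ for a standard Gaussian $W^\ell$.

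I would then evaluate the conditional probability of the intersection of these events over $\ell\in(\mathbb L_k^{0,p}(x_k^i))^c$: intersecting the intervals amounts to taking the supremum of the lower endpoints and the infimum of the upper endpoints, which are precisely $\alpha_j^p(x_k^i,\zeta)$ and $\beta_j^p(x_k^i,\zeta)$, and the probability is the Gaussian measure of the resulting interval, clamped below at $0$, i.e. $(\Phi_0(\alpha_j^p)-\Phi_0(\beta_j^p))^+$ with the orientation recorded in the statement. Multiplying by the retained weight $\zeta$ and integrating against the law of $\zeta\sim\mathcal N(0;1)$ yields the announced formula; the ``in particular'' case is then immediate, since the stated hypothesis forces $\mathbb L_k^{0,p}(x_k^i)=\emptyset$ and removes the indicator factors.

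The delicate point — exactly as in Proposition~\ref{PropTransitionProba} — is the bookkeeping of the domain decomposition and, above all, the orientation of the intervals when the standardised remaining combinations $W^\ell$ are reduced to a common scalar Gaussian. This requires carefully tracking the signs of the relevant volatility entries (the analogue of the splitting into $\mathbb J_k^{+}$ and $\mathbb J_k^{-}$ used earlier), together with the boundary conventions $x_{k+1}^{\ell,1/2}=-\infty$ and $x_{k+1}^{\ell,N_{k+1}^\ell+1/2}=+\infty$ at the extreme Voronoi cells. Once these endpoints are correctly ordered, the collapse to the single interval $(\alpha_j^p,\beta_j^p)$, and hence to a one-dimensional expectation over $\zeta$, is routine and parallels the computations already carried out for the transition probabilities.
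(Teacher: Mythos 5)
Your proposal follows the paper's own proof essentially line by line: reduce $\Lambda_k^{ij,p}$ to an unconditional expectation over $Z_{k+1}$, condition on $\zeta=Z_{k+1}^p$, split the indices $\ell$ according to $\mathbb{L}_k^{0,p}(x_k^i)$ so that degenerate rows yield the indicator factors, standardise the remaining Gaussian combinations, intersect the resulting intervals via the sup/inf endpoints $\alpha_j^p,\beta_j^p$ to obtain the clamped difference $\big(\Phi_0(\alpha_j^p)-\Phi_0(\beta_j^p)\big)^{+}$, and integrate against the law of $\zeta\sim\mathcal{N}(0;1)$. The one delicate step you flag --- identifying the standardised combinations for different $\ell$ with a single common scalar Gaussian --- is exactly the step the paper itself performs (silently, by writing each event $A_{\ell_+,k}^p(u)$ as a constraint on one and the same $Z\sim\mathcal{N}(0;1)$), so your argument is faithful to, and no less rigorous than, the published one.
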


\begin{proof}  [{\it \textbf{Proof.}}]
Let us set $v^{\ell,j_{\ell}+}:=x_{k+1}^{\ell,j_{\ell}+1/2}$ and $v^{\ell,j_{\ell}-} = x_{k+1}^{\ell,j_{\ell}-1/2}$. 
We have 
\setlength\arraycolsep{1pt}
\begin{eqnarray*}
\Lambda^{ij}_k & =& \mathbb E\big(Z_{k+1} \mathds{1}_{\{ {\cal E}_k(\hat X_k,Z_{k+1})  \in C_j(\Gamma_{k+1})  \}} \big\vert \hat X_k = x_k^i \big) \\
& = & \mathbb E\big(Z_{k+1} \mathds{1}_{\{ {\cal E}_k(x_k^i,Z_{k+1})  \in C_j(\Gamma_{k+1})  \}} \big) \\
& = & \mathbb E\big(Z_{k+1} \mathds{1}_{A^{i}_{j}} \big) 
\end{eqnarray*}
where 
\setlength\arraycolsep{1pt}
\begin{eqnarray*}
A^{i}_{j}  & = &   \bigcap_{\ell=1}^d \{{\cal E}_k^{\ell}(x_k^{i},Z_{k+1}) \in (v^{\ell,j_{\ell}-}, v^{\ell,j_{\ell}+} )  \} \\
& = &  \bigcap_{\ell=1}^d  \big\{  m_k^{\ell}(x_k^{i}) + \sqrt{\Delta}\sigma_k^{\ell p} (x_k^i) Z_{k+1}^{p}  +  \sum_{p' \not=p}  \sqrt{\Delta}\sigma_k^{\ell p' } (x_k^i) Z_{k+1}^{p'}  \in (v^{\ell,j_{\ell}-}, v^{\ell,j_{\ell}+} )  \big\}.
\end{eqnarray*}
 Then, conditioning by $Z_{k+1}^{p}$ shows that the component $\Lambda_k^{ij, p}$ of $\Lambda_k^{ij}$ reads
\begin{equation*}
\Lambda^{ij,p}_k =  \mathbb E \big( \mathbb E\big(Z_{k+1}^p \mathds{1}_{A^{i}_{j_{\ell}}} \big)  \big \vert Z_{k+1}^{p}\big)  = \mathbb E(\Psi(\zeta)), \qquad \zeta \sim {\cal N}(0, 1),
\end{equation*}
where for every $u$,
\begin{eqnarray*}
\Psi(u)  &=& u\,  \mathbb P \Big(  \bigcap_{\ell=1}^d A_{\ell,k}^p(u)  \Big) 
\end{eqnarray*}
with
\[
A_{\ell,k}^p(u) =  \big\{  m_k^{\ell}(x_k^{i}) + \sqrt{\Delta}\sigma_k^{\ell p} (x_k^i) u  +  \sum_{p' \not=p}  \sqrt{\Delta}\sigma_k^{\ell p' } (x_k^i) Z_{k+1}^{p'}  \in (v^{\ell,j_{\ell}-}, v^{\ell,j_{\ell}+} )  \big\}.
\]
Keep in mind that 
\[
 \sum_{p' \not=p}  \sqrt{\Delta}\sigma_k^{\ell p' } (x_k^i) Z_{k+1}^{p'}   \ \stackrel{{\cal L}}{=}  \   \Big( \Delta \sum_{p' \not=p} \big(\sigma_k^{\ell p' } (x_k^i) \big)^2 \Big)^{1/2} Z, \quad   Z  \sim {\cal N}( 0; 1).
\]
Then, we may write 
\begin{eqnarray*}
\Psi(u)  &=& u\, \mathds {1}_{\big \{    \bigcap_{\ell_0 \in  \mathbb L_k^{0,p} (x_k^i)} A_{\ell_0,k}^p(u)   \big \}} \mathbb P \Big(  \bigcap_{\ell_{+} \in \big( \mathbb L_k^{0,p} (x_k^i) \big)^c} A_{\ell_{+},k}^p(u)  \Big) 
\end{eqnarray*}
with 
\[
 A_{\ell_{+},k}^p(u)  =  \big\{ Z  \in   (x_{k+1}^{\ell,p,j_{\ell}-} (x_k^i,u),  x_{k+1}^{\ell,p,j_{\ell}+} (x_k^i,u)) \big\}, \quad   Z  \sim {\cal N}( 0; 1).
\]
It follows that 
\begin{eqnarray*}
\Psi(u)  &=&
 u\, \mathds {1}_{\big \{    \bigcap_{\ell_0 \in  \mathbb L_k^{0,p} (x_k^i)} A_{\ell_0,k}^p(u)   \big \}} \mathbb P \Big(  \bigcap_{\ell_{+} \in  \big( \mathbb L_k^{0,p} (x_k^i) \big)^c} A_{\ell_{+},k}^p(u)  \Big) \\
&= & u\, \prod_{\ell \in  \mathbb L_k^{0,p} (x_k^i)} \mathds {1}_{\big \{   u  \in \mathbb J_{k,j_{\ell}}^{0,p}(x_k^i) \big \}} \mathbb P \big(Z  \in  \alpha_{j}^p (x_k^{i},u), \beta_{j}^p (x_k^{i},u) \big). 
\end{eqnarray*}
The result follows immediately.
\end{proof}

\subsubsection{Pricing a risk neutral  Black-Scholes Call under the historical probability}
Let $(\Omega, {\cal A}, \mathbb P)$ be a probability space.  We consider a call option with maturity $T$ and strike $K$  on a stock price $(X_t)_{t \in [0,T]}$  with dynamics
\[
dX_t = \mu X_t dt + \sigma X_t dW_t.
\] 
  Considering a self financing portfolio $Y_t$ with  $\varphi_t$ assets and bonds with risk free return $r$.  We know that (see~\cite{ElkPenQue})  the portfolio evolves according to the following dynamics:
  \begin{equation}  \label{EqBSDENum}
   Y_t = Y_T   +  \int_t^T f(Y_s,Z_s) ds - \int_{t}^T Z_s dW_s
  \end{equation}
where the  payoff $Y_T = (X_T - K)^{+}$, the  hedging strategy   $Z_t  = \sigma \varphi_t X_t$ and $f(y,z)  = -ry- \frac{\mu -r}{\sigma} z.$  It is clear that the function $f$ is linear with respect to $y$ and $z$ and, it is  Lipschitz continuous with $[f]_{\rm Lip} = \max(r, \frac{\mu-r}{\sigma})$.  We perform the numerical tests from the algorithm we propose  with the following parameters 
\[
X_0 =100, \quad r = 0.1, \quad \mu=0.2, \quad K=100, \quad T=0.5
\]
and make varying the volatility $\sigma$.

\begin{table}[h!]
\begin{center}
\begin{tabular}{|c|c|c|c|c|c|c|c|}
\hline
$\sigma$ &$\hat Y_0$ ($n=20$) & $\hat Y_0$ ($n=40$) &$Y_0$ &$\hat Z_0$ ($n=20$) & $\hat Z_0$ ($n=40$) & $Z_0$ \\
\hline
$0.05$&$04.97$& $05.01$&  $05.00$  &$ 04.67$ & $04.58$& $04.62$\\
$0.07$&$05.23$& $05.26$& $05.27$  &$   06.04 $&$05.95$&05.95  \\
$0.10$ &$05.81$&   $05.84$& $05.85$  &$  07.83 $& $07.72$ & $07.71$\\
$0.30$ &$10.88$& $10.89$& $10.91$  &$ 19.00  $& $18.91$ &$19.01$\\
$0.40$ &$13.56$& $13.56$& $13.58 $   & $24.91 $&$24.82$ &$24.99$ \\ 
$0.50$ &$16.26$& $16.25$& $16.26$  &$    31.07 $& $30.98$& 31.24\\
\hline
\end{tabular}
\end{center}
\caption{Call price in the BS model:  $N_k = 100, \ \forall k=1, \dots, n; n \in \{20, 40\} $. Computational time:   $< 1$ second for $n=20$ and around $1$ second for $n=40$.}\label{table1}
\end{table}

\subsubsection{Multidimensional example}
We consider the following  example due to J.-F. Chassagneux: let $t \in [0,T]$. Set 
\[
e_t = \exp(W_t^1+ \ldots +W_t^d +t)
\]
 where $W$ is a $d$-dimensional Brownian motion. Consider the following BSDE:
\[
dX_t = dW_t, \qquad -dY_t  =  f(t,Y_t,Z_t) dt - Z_t \cdot dW_t,  \qquad Y_T = \frac{e_T}{1 +e_T},
\]
where $f(t,y,z)  = (z_1+ \ldots +z_d) \big(y - \frac{2+d}{2d} \big)$.  The solution of this BSDE is given by
\begin{equation}
Y_t  = \frac{e_t}{1 +e_t}, \qquad Z_t = \frac{e_t}{(1+e_t)^2}.
\end{equation}

For the numerical experiments, we put the (regular) time discretization mesh to  $n=20$, with discretization step $\Delta$. We  use the uniform dispatching grid allocation and define the quantization $(\hat W_{t_k})_{ 0 \le k \le n}$ of the Brownian trajectories  $(W_{t_k})_{ 0 \le k \le n}$  from the following recursive procedure
\begin{equation}
\hat W_{t_{k+1}} = \hat W_{t_k}  + \sqrt{\Delta} \,  \hat {\varepsilon},
\end{equation}
$\hat W_0 = 0$ and  where  $\hat {\varepsilon}$ is the optimal quantization of the $d$-dimensional standard Gaussian random variable. We choose $t=0.5$, $d=2,3$, so that $Y_0 =0.5$ and $Z_0^{i} = 0.25$, for every $i=1, \ldots, d$. 

Using the Markovian product quantization method we get
\begin{enumerate}
 \item for $d=2$, with $N_1=N_2=30$:  $\hat Y_0 = 0.504$, $\hat Z_0^1 = \hat Z_0^2= 0.24$. The computation time is around $4$ seconds. 
 
 \item for $d=3$, with $N_1=N_2=N_3=15$: $\hat Y_0 = 0.547$, 	$\hat Z_0^1 = \hat Z_0^2 = \hat Z_0^1 =  0.22$. The computation time is around $1$ minute. 
 \end{enumerate}

  Remark that the only reason motivating the choice of this example is the fact that the  considered backward  has  an explicit solution. Nevertheless  our method works for a general local volatility diffusion process $X$. We also note that when choosing the same size $N_i$ for all marginal quantizers, the complexity of the algorithm is equal to $N_i^d$.  This prevents us from going beyond the dimension 3 without increasing significantly   the  computation time. One way of reducing the computation timein dimension $d \ge 4$  may  be to use the parallel computing. 

   
 \newpage
  
\bibliographystyle{plain}
\bibliography{NLfilteringbib}

\end{document}